  \crefname{theorem}{Theorem}{Theorems}
  \crefname{thm}{Theorem}{Theorems}
  \crefname{lemma}{Lemma}{Lemmas}
  \crefname{lem}{Lemma}{Lemmas}
  \crefname{remark}{Remark}{Remarks}
  \crefname{prop}{Proposition}{Propositions}
  \crefname{defn}{Definition}{Definitions}
  \crefname{corollary}{Corollary}{Corollaries}
  \crefname{section}{Section}{Sections}
  \crefname{figure}{Figure}{Figures}
\newtheorem{thm}{Theorem}[section]
\newtheorem{lemma}[thm]{Lemma}
\newtheorem{corollary}[thm]{Corollary}
\numberwithin{equation}{section}
\theoremstyle{definition}
\newtheorem{remark}[thm]{Remark}
\def\cP{\mathcal{P}}
\def\E{\mathbb{E}}
\def\C{\mathbb{C}}
\def\H{\mathbb{H}}
\def\R{\mathbb{R}}
\def  \p- {p\textunderscore}
\def \ka{\kappa}
\def\hf{\hat{f}}
\DeclareMathOperator{\Imag}{Im}
\newcommand {\eqd} {\stackrel{(d)}{=}}
\title{On the regularity of SLE\ trace}
\author{P.K. Friz, H. Tran \\
%EndAName
TU and WIAS\ Berlin, UCLA}
\begin{document}
\maketitle
	%	\abstract{In this note we show that $SLE_\kappa$ }

%
%---------------OUTLINE ----------------section
%
%
%
%
%
%

\begin{abstract}
We revisit regularity of SLE trace, for all $\kappa \neq 8$, and establish Besov regularity under the usual half-space capacity parametrization.
With an embedding theorem of Garsia--Rodemich--Rumsey type, we obtain finite moments (and hence almost surely) optimal variation
regularity with index $\min (1 + \kappa / 8, 2) $, improving on previous works of Werness, and also (optimal) H\"older
regularity \`a la Johansson Viklund and Lawler. 
\end{abstract}

\section{Introduction}

It is classical that SLE$_{\kappa }$ has a.s. continuous trace $\gamma$, any $\kappa
\in (0,\infty )$. (The trivial case $\kappa =0$ will be disregarded throughout.) With the exception of $\kappa =8$, the (classical) proof
has two steps: (1)\ estimates of moments of $\hat{f}_{t}^{\prime }\left(
iy\right) $ the derivative of the shifted inverse (Loewner) flow (2)
partition of $\left( t,y\right) $-space into Whitney-type boxes, together
with a Borel--Cantilli argument.  This strategy of proof is very standard (see \cite{RS05, Law05} or also \cite{JVRW14})
%(CITE\ ROHDE-SCHRAMM\ 05,\ LAWLER\ BOOK, or also FREDRIK JOHANSSON VIKLUND,
%STEFFEN ROHDE, AND CARTO WONG 14) 
and was %{\color{red} *** HUY, DO YOU AGREE WITH THIS STATEMENT THAT JVL11 IS A ``REFINEMENT'' OF PREVIOUS STUFF ??? ***}% 
subsequently refined in \cite{JVL11} and \cite{Lind08} %[CITE LAWLER-VIKLUND] 
to show that SLE$_{\kappa }$ (always in half-space
parametrization) is H\"{o}lder continuous on any compact set in $(0,\infty )$
with any H\"{o}lder exponent less than%
\begin{equation*}
\alpha _{\ast }\left( \kappa \right) =1-\frac{\kappa }{24+2\kappa -8\sqrt{%
\kappa +8}};
\end{equation*}%
on compact sets in $[0,\infty )$, the critical exponent has to be modified to 
$$\alpha _{0}\left(
\kappa \right) =\min \left( \alpha _{\ast }\left( \kappa \right) ,1/2\right).$$ 

On the other hand, based on Aizenman--Burchard techniques, it was shown
in \cite{Wer12}, under the (technical) condition $\kappa \le 4$, that SLE enjoys $p$-variation
regularity for any
\begin{equation*}
p < p_{\ast }=1+\frac{\kappa }{8}. %\text{ \ \ (case }\kappa \leq 4\text{).}
\end{equation*}%
Loosely stated, the main result of this paper is a Besov regularity for SLE of the form

\begin{thm} \label{thm:preBesovAintro}
Assume $\kappa > 0$, and fix $T>0$. Then, for suitable $\delta \in (0,1), q >1$ we have
\[  \mathbb{E}\left\Vert \gamma \right\Vert _{W^{\delta ,q};\left[ 0,T\right] }^{q}  <\infty. 
\]
In particular, for a.e. realization of SLE trace, we have $\gamma(\omega)|_{[0,T]} \in W^{\delta,q}$.
\end{thm}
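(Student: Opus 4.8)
The plan is to bound the expected Gagliardo--Slobodeckij seminorm directly and then appeal to Tonelli's theorem. Since the $L^q$-part of the $W^{\delta,q}$-norm is controlled trivially (the trace has bounded moments on the compact time interval $[0,T]$), the only substantive quantity is the seminorm, which I write as the double integral
\[
  \|\gamma\|_{W^{\delta,q};[0,T]}^q
  = \int_0^T\!\!\int_0^T \frac{|\gamma(t)-\gamma(s)|^q}{|t-s|^{1+\delta q}}\,ds\,dt .
\]
Because the integrand is nonnegative, Tonelli lets me exchange expectation and integration, reducing everything to the single increment moment bound
\[
  \E\,|\gamma(t)-\gamma(s)|^q \lesssim |t-s|^{\eta}, \qquad \eta>0 .
\]
Granting this, the integrand of $\E\|\gamma\|_{W^{\delta,q};[0,T]}^q$ is $\lesssim |t-s|^{\eta-1-\delta q}$, which is integrable over $[0,T]^2$ exactly when $\delta q<\eta$. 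It therefore suffices to exhibit one pair $(\delta,q)$ with $q>1$, $\delta\in(0,1)$ and $\delta q<\eta$; the ``in particular'' assertion is then automatic, since a random variable of finite expectation is a.s.\ finite.

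The increment estimate I would obtain from the domain Markov property and the scaling of SLE. Writing $h=t-s$ and $\hat f_s(z)=f_s(z+W_s)$ for the centred inverse Loewner flow, the Markov property gives $\gamma(t)-\gamma(s)=\hat f_s(\tilde\gamma(h))-\hat f_s(0)$, where $\tilde\gamma$ is an SLE independent of $\mathcal F_s$, and scaling gives $\tilde\gamma(h)\eqd \sqrt{h}\,\tilde\gamma(1)$, i.e.\ the fresh tip lives at scale $\sqrt{h}$. A Koebe/distortion argument comparing $\hat f_s'$ at $\tilde\gamma(h)$ with its value at the reference point $i\sqrt{h}$ then yields
\[
  |\gamma(t)-\gamma(s)| \lesssim |\hat f_s'(i\sqrt{h})|\,|\tilde\gamma(h)| ,
\]
so that, exploiting that $\hat f_s$ is $\mathcal F_s$-measurable and hence independent of $\tilde\gamma$,
\[
  \E\,|\gamma(t)-\gamma(s)|^q \lesssim \E\big[|\hat f_s'(i\sqrt{h})|^q\big]\,\E\big[|\tilde\gamma(1)|^q\big]\,h^{q/2}.
\]

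The whole matter thus rests on the one-point moment bound for the inverse-flow derivative, $\E[|\hat f_s'(iy)|^q]\lesssim y^{-\zeta(q)}$ uniformly in $s\in[0,T]$, where $\zeta(q)$ is the exponent governing the SLE tip multifractal spectrum. This is the estimate underlying the Rohde--Schramm and \cite{JVL11} proofs of H\"older continuity, and I would derive it in the same way: write an It\^o/ODE equation for a weighted moment $\E[\hat f_s'(iy)^{\lambda}\,(\Imag \hat f_s(iy)/y)^{\mu}]$ and choose $(\lambda,\mu)$ so as to kill the drift, turning the expectation into a tractable (super)martingale. Taking $y=\sqrt{h}$ gives $\E[|\hat f_s'(i\sqrt{h})|^q]\lesssim h^{-\zeta(q)/2}$, hence $\eta=(q-\zeta(q))/2$, and the existence of an admissible $q>1$ with $\eta>0$ — equivalently $\zeta(q)<q$ — is precisely the regime in which \cite{JVL11,Lind08} obtain a positive H\"older exponent, so the same range of $q$ serves here.

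I expect the derivative moment estimate, together with its uniformity in $s\in[0,T]$ and the Koebe-type passage from the reference point $i\sqrt{h}$ to the genuine tip, to be the main obstacle; bounding the moments $\E|\tilde\gamma(1)|^q$ and the bookkeeping of exponents are comparatively routine. The value $\kappa=8$ is special (the ODE exponents degenerate and the martingale scheme breaks down), consistent with its exclusion in the abstract; for $\kappa\neq 8$ the argument goes through, and one then optimizes over $(\delta,q)$ to recover the sharp variation index $\min(1+\kappa/8,2)$ and the H\"older exponents quoted in the introduction.
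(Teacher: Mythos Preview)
Your high-level strategy — bound $\E|\gamma(t)-\gamma(s)|^q$, apply Tonelli, then choose $\delta$ small enough — is exactly what the paper does. The gaps are in the increment bound itself.

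\textbf{The derivative moment is not uniform in $s$.} You claim $\E[|\hat f_s'(iy)|^q]\lesssim y^{-\zeta(q)}$ uniformly for $s\in[0,T]$. In the paper's notation (their $\zeta$ is your $-\zeta(q)$), the scaling relation $\hat f_s'(iy)\eqd \hat f_{s/y^2}'(i)$ gives
\[
\E[|\hat f_s'(iy)|^q]\;\lesssim\; A_s\,y^{\zeta},\qquad A_s=\max(s^{-\zeta/2},1),
\]
and for every $\kappa\le 8$ one has $\zeta>0$ throughout the admissible range of $q$, so $A_s\to\infty$ as $s\downarrow 0$. Consequently the uniform estimate $\E|\gamma(t)-\gamma(s)|^q\lesssim |t-s|^{(q+\zeta)/2}$ you assert is false near $s=0$: scaling alone ($\gamma(2s)-\gamma(s)\eqd s^{1/2}(\gamma(2)-\gamma(1))$) shows the best uniform exponent cannot exceed $q/2<(q+\zeta)/2$. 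The paper's Lemma~3.2 keeps the singular prefactors $A_s$ and $t^{-\zeta/2}$ explicitly, and the proof of Theorem~4.1 then imposes the extra condition $\zeta<2$ (their set $J_1$) precisely so that $s^{-\zeta/2}$ is integrable on $[0,T]$. Your write-up is missing this entire mechanism.

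\textbf{The ``Koebe/distortion'' step is not a one-liner.} The inequality $|\hat f_s(\tilde\gamma(h))-\hat f_s(0)|\lesssim |\hat f_s'(i\sqrt h)|\,|\tilde\gamma(h)|$ does not follow from distortion: $0$ is a boundary point, and $\tilde\gamma(h)$ may have imaginary part arbitrarily small compared with $\sqrt h$, so the hyperbolic distance from $i\sqrt h$ to $\tilde\gamma(h)$ is unbounded. The paper instead splits the increment into four pieces,
\[
|\gamma(t)-\hat f_t(iy)|,\quad |\gamma(s)-\hat f_s(iy)|,\quad |f_t(iy+U_s)-f_s(iy+U_s)|,\quad |f_t(iy+U_t)-f_t(iy+U_s)|,
\]
with $y=\sqrt{t-s}$; the first two are handled by integrating $|\hat f'|$ along the vertical segment (H\"older in $u$, not a single-point Koebe bound), the third by a time-continuity lemma for the flow, and the fourth by a horizontal-distortion lemma together with Gaussian moments of $U_t-U_s$. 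The last piece also forces a H\"older step with an auxiliary exponent $\tilde q>q$, which is why $\tilde\zeta,\theta$ appear in the final estimate. None of this is captured by your single inequality.

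Finally, note that $\kappa=8$ is \emph{not} excluded in this theorem; the paper's Besov statement covers all $\kappa>0$ (only the H\"older and variation corollaries degenerate at $\kappa=8$, because the relevant parameter interval collapses).
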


For the (classical) definition of the Besov space $W^{\delta ,q}$, see (\ref{equ:sob}) below; we also postpone the important precise description of possible values $\delta, q$. 
In the spirit of step (1), cf. the discussion at the very beginning of the introduction, moment estimates on $\hat{f}_{t}^{\prime }\left(iy\right)$, taken in the sharp form of
 \cite{JVL11}, are an important ingredient in establishing Theorem \ref{thm:preBesovAintro}. In turn, this theorem unifies and extends previous works \cite{JVL11, Wer12} by exhibiting Besov regularity as common source of both (optimal) H\"older {\it and} $p$-variation regularity for SLE trace. 
 More specifically, we have the following first corollary which recovers the optimal H\"older exponent of SLE trace, as previously established in \cite{JVL11} and in fact improves their almost-sure statement to finite moments. %result. 

% but note right-away that - combined with
%recent Besov embeddings - they yield both (optimal) H\"older and $p$-variation results for SLE. {\color{red} While
% our focus on Besov regularity refines \cite{JVL11}, where H\"older-norms with identical exponent were seen to be finite almost surely}. We have 

\begin{corollary} Assume $\kappa \ne 8$. On compact sets in $[0,\infty)$ (resp. $(0,\infty)$), the $\alpha$-H\"older norm of SLE curve
with $\alpha < \alpha_0 $ (resp. $\alpha<\alpha_*$) has finite $q$-moment, for some $q>1$. 
%{\color{red} *** Actually, for all $q < q^*(\alpha,\kappa)$ to be determined... *** }
\end{corollary}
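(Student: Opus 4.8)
The plan is to read off Hölder regularity directly from the Besov regularity of Theorem \ref{thm:preBesovAintro}, using the one-dimensional Sobolev (Morrey--Slobodeckij) embedding, which is itself an instance of the Garsia--Rodemich--Rumsey (GRR) estimate advertised in the abstract. Concretely, I would first record the deterministic fact that whenever $\delta \in (0,1)$ and $q>1$ satisfy $\delta q > 1$, there is a constant $C = C(\delta,q,T)$ with
\[ \left\Vert f \right\Vert_{C^{\alpha};[0,T]} \le C \left\Vert f \right\Vert_{W^{\delta,q};[0,T]}, \qquad \alpha = \delta - \frac{1}{q}, \]
for every $f \in W^{\delta,q}$. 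The quickest route is to apply GRR to the Gagliardo seminorm appearing in (\ref{equ:sob}) with $\Psi(u)=u^{q}$ and the power gauge $p(u)=u^{\delta+1/q}$: the finiteness of the double integral then converts into a modulus-of-continuity bound of order $|s-t|^{\delta-1/q}$, i.e. control of the Hölder seminorm. The uniform-norm part of $\Vert f\Vert_{C^{\alpha}}$ is in turn controlled by the $L^{q}$ term in $\Vert f\Vert_{W^{\delta,q}}$ together with the seminorm, by a standard argument on the bounded interval $[0,T]$, which yields the full embedding.

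Granting this, the corollary follows by a parameter-chasing argument combined with the moment bound. Fix $\alpha < \alpha_*$ and work on a compact subinterval of $(0,\infty)$ (the case of compacts in $[0,\infty)$ with $\alpha<\alpha_0$ being identical, but drawing on the endpoint-adapted range of admissible exponents). Appealing to the precise description of admissible pairs $(\delta,q)$ postponed after Theorem \ref{thm:preBesovAintro}, I would select one with $\delta q > 1$ and $\delta - 1/q > \alpha$ for which $\mathbb{E}\Vert \gamma\Vert_{W^{\delta,q};[0,T]}^{q} < \infty$. Raising the embedding inequality to the $q$-th power and taking expectations then gives
\[ \mathbb{E}\left\Vert \gamma\right\Vert_{C^{\alpha};[0,T]}^{q} \le C^{q}\, \mathbb{E}\left\Vert \gamma\right\Vert_{W^{\delta,q};[0,T]}^{q} < \infty, \]
which is exactly the claimed finite $q$-moment of the Hölder norm, with the same $q>1$.

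The crux of the argument --- and the only place where anything beyond soft analysis enters --- is the existence of such a pair $(\delta,q)$: one needs the admissible region of Theorem \ref{thm:preBesovAintro} to be rich enough that the ``trace'' of the embedding, namely $\sup(\delta - 1/q)$ over admissible $(\delta,q)$, actually reaches the optimal exponent $\alpha_*$. This forces the limit $\delta \uparrow 1$, $q \uparrow \infty$ to be taken along the correct curve in parameter space, and it is here that the sharp form of the derivative moment estimates of \cite{JVL11} feeding Theorem \ref{thm:preBesovAintro} is essential; anything weaker would saturate only a suboptimal Hölder exponent. Near $t=0$ the half-plane capacity parametrization caps the attainable regularity at $1/2$, so on compacts in $[0,\infty)$ the admissible exponents can only realize $\alpha_0 = \min(\alpha_*,1/2)$, which accounts for the stated dichotomy. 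I expect verifying that the admissible region indeed saturates $\alpha_*$ (resp. $\alpha_0$) to be the main technical obstacle, the remaining steps being the routine embedding displayed above.
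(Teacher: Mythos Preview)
Your strategy is exactly the paper's: combine the Besov moment bound (Theorem~\ref{thm:preBesovA}) with the deterministic embedding $W^{\delta,q}\hookrightarrow C^{\delta-1/q}$ of (\ref{lem:Holder}), then optimize $\delta-1/q$ over the admissible parameter region. The paper carries this out in Theorem~\ref{thm:Hoel}, and your outline matches it.

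One concrete correction, though: your expectation that the optimum ``forces the limit $\delta\uparrow 1$, $q\uparrow\infty$'' is wrong and would send you down the wrong path. In the paper's parametrization by $r$, the admissible $q=q(r)$ are bounded (indeed $q(r)<q(r_c)<\infty$ since $r<r_c$), and the optimal H\"older exponent is attained at an \emph{interior} point
\[
r_+=\frac{4(-2+\sqrt{8+\kappa})}{\kappa}\in I\cap J_2,
\]
found by maximizing $\phi(r)=\tfrac{\zeta(r)+q(r)-2}{2q(r)}$; one computes $\phi(r_+)=\alpha_*$. The phase transition to $\alpha_0=\min(\alpha_*,1/2)$ on $[0,1]$ then arises, as you correctly guessed, from the extra integrability constraint at $t=0$ (the set $J_1$), which for $\kappa\le 1$ excludes $r_+$ and pushes the supremum to the boundary value $\phi(j_{1\pm})=1/2$. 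So the ``parameter-chasing'' is a genuine one-variable calculus optimization, not an asymptotic limit.
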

The following extends \cite{Wer12} (from $\kappa \le 4$) to all $\kappa \ne 8$, and again improves from a.s.-finiteness to existence of moments. At this paper neared completion, we learned that Lawler and Werness \cite{LW16} found an independent proof of a.s. finite p-variation of SLE trace (for $\kappa<8$).%
\begin{corollary} Assume $\kappa \ne 8$. On compact sets in $[0,\infty)$ the $p$-variation norm of SLE curve with $p > p_*$ has finite $q$-moment, for some $q>1$. 
%{\color{red} *** Actually, for all $q < q^*(\kappa)$ to be determined, not dependent on choice of $p$ *** }

\end{corollary}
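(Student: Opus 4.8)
The plan is to deduce the corollary from Theorem~\ref{thm:preBesovAintro} by means of a purely deterministic embedding of the Besov space $W^{\delta,q}$ into the space of paths of finite $p$-variation, precisely the ``Garsia--Rodemich--Rumsey type'' embedding advertised in the abstract. The key analytic input is the variation embedding: for $\delta \in (0,1)$ and $q \ge 1$ with $\delta q > 1$, every $f \in W^{\delta,q}([0,T])$ has finite $p$-variation with $p = 1/\delta$, and moreover there is a constant $C = C(\delta,q,T)$ with
\[
\|f\|_{p\text{-var};[0,T]} \le C\,\|f\|_{W^{\delta,q};[0,T]}, \qquad p = 1/\delta.
\]
This is deterministic; I would either cite the standard form (the variation-embedding theorem in the rough-path literature) or record a short GRR-based proof, noting that the Gagliardo seminorm in (\ref{equ:sob}) is exactly the quantity controlled by the GRR inequality with gauge $\psi(u)=u^q$ and weight $u^{\delta+1/q}$.

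Granting this, the argument is short. First I would fix $p > p_{\ast}=1+\kappa/8$ and set $\delta := 1/p$, so that $\delta < 1/p_{\ast}$. Next I would choose the integrability exponent $q>1$ large enough that simultaneously (i) $\delta q > 1$, as required by the embedding, and (ii) the pair $(\delta,q)$ lies in the admissible range for which Theorem~\ref{thm:preBesovAintro} yields $\E\,\|\gamma\|_{W^{\delta,q};[0,T]}^q < \infty$. With such a choice, raising the deterministic bound to the $q$-th power and taking expectations gives
\[
\E\,\|\gamma\|_{p\text{-var};[0,T]}^q \;\le\; C^q\, \E\,\|\gamma\|_{W^{\delta,q};[0,T]}^q \;<\; \infty,
\]
which is exactly the claimed finiteness of a $q$-moment of the $p$-variation norm for some $q>1$. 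Since $T>0$ is arbitrary and every compact subset of $[0,\infty)$ is contained in some $[0,T]$, the statement on compact sets follows.

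The hard part will be the parameter matching in step (ii): I must verify that the admissible $(\delta,q)$-region delivered by Theorem~\ref{thm:preBesovAintro} actually reaches up to the critical line $\delta = 1/p_{\ast}$, i.e.\ that one may take $\delta$ arbitrarily close to $8/(8+\kappa)$ while keeping both $\delta q > 1$ (from the embedding) and the Besov moment finite (from the SLE estimates). This is precisely where the sharp derivative moment bounds of \cite{JVL11} enter, since they dictate how large $q$ may be chosen for a given $\delta$, and hence whether the embedding constraint $\delta q>1$ is compatible with the constraint inherited from the SLE moment estimates. For $\kappa>8$ one should moreover keep in mind the cap at index $2$ appearing in the abstract: there $p_{\ast}>2$, so the target exponent $p>p_{\ast}$ is comfortably above the true variation index, and the embedding with $\delta = 1/p < 1/2$ applies with room to spare; confirming this last point is the only remaining check in that regime.
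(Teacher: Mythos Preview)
Your proposal follows essentially the same route as the paper: establish Besov regularity with finite $q$-th moment, then apply the deterministic Besov--variation embedding (\ref{lem:variation}) to transfer this to $p$-variation. Two points deserve comment.

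First, in the paper's precise version of Theorem~\ref{thm:preBesovAintro} (Theorem~\ref{thm:preBesovA}), the integrability exponent $q$ is \emph{not} a free parameter one can simply take large: the admissible pairs $(\delta,q)$ are parametrized by an auxiliary $r \in I \cap J_1$, with $q = q(r) = (1+\kappa/4)r - \kappa r^2/8$ and $\delta$ ranging over $(0, (\zeta(r)+q(r))/(2q(r)))$. The embedding constraint $\delta q > 1$ then becomes the additional requirement $r \in J_2$, i.e.\ $\zeta(r)+q(r) > 2$. Your ``choose $q$ large enough'' must therefore be understood as finding a suitable $r$, and the paper's optimization (Theorem~\ref{thm:Pvar}) shows that the infimum of $2q(r)/(\zeta(r)+q(r))$ over $r \in I \cap J_1 \cap J_2$ is exactly $p_\ast$. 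This is the ``parameter matching'' you flag as the hard part; the paper carries it out explicitly.

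Second, your treatment of $\kappa > 8$ contains a slip: you write ``there $p_\ast > 2$'', but in fact $p_\ast = \min(1+\kappa/8,2) = 2$ in that regime. The relevant point is rather that as $r \downarrow 8/\kappa$ one has $2q/(\zeta+q) \to 2$, so one can reach $p$-variation for any $p>2$, which is the correct optimal statement there.
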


As a further corollary, precise statement left to Corollary \ref{cor:dim}, we note that our $p$-variation result implies an upper bound on the Hausdorff dimension of the SLE trace; along the lines of the original Rohde--Schramm paper. 
Our approach should also allow to bypass step (2), cf. the discussion in the very beginning, in proving continuity of SLE trace, but we will not focus on this aspect here. 
\medskip

We give some discussion about the basic ideas.
We note that every $\alpha $-H\"{o}lder continuous curve is
automatically of finite $p$-variation, with $p=1/\alpha $. In some prominent
cases, this yields the correct (optimal) $p$-variation regularity: for
instance, Brownian motion is $\left( 1/2-\varepsilon \right) $-H\"{o}lder
and then of finite $\left( 2+\varepsilon \right) $-variation, $\varepsilon >0
$. However, already for elements in the Cameron--Martin space $W^{1,2}\left[
0,1\right] $, that is, absolutely continuous curves $h$ with $\int_{0}^{1}|%
\dot{h}|^{2}dt<\infty $, this fails: in general, such an $h$ is $1/2$-H\"{o}%
lder (hence automatically of $2$-variation) but in fact has finite $1$%
-variation. The same phenomena is seen for SLE,%
\begin{equation*}
p_{\ast }<\frac{1}{\alpha _{\ast }}\leq \frac{1}{\alpha _{0}}.
\end{equation*}%
Let us also note that, from the stochastic\ point of view, the half-space
parametrization is not fully satisfactory as it induces an artifical,
directed view on SLE. A decisive advantage of $p$-variation is its
invariance with respect to reparametrization, related at least in spirit to
the natural parametrization introduced in \cite{LR15}, \cite{LSh11}. The above
Cameron--Martin example in fact holds a key message: Sobolev-regularity is
ideally suited to guarantee both $\alpha $-H\"{o}lder and $p$-variation
regularity with $p<1/\alpha $. More quantitatively, the (elementary)
embedding 
\begin{equation*}
W^{1,2}\subset C^{1\text{-var}}\cap C^{1/2\text{-H\"{o}l}}
\end{equation*}%
(always on compacts sets in $[0,\infty)$) has been generalized in \cite{FV06}, by a delicate application of the Garsia-Rodemich-Rumsey inequality,
 to Besov spaces as follows:

\begin{thm}[Besov-variation embedding] \label{thm:BesovVariation}
Assume $\delta  \in \left( 0,1\right) ,q\in \left( 1,\infty \right)$
 such that $\delta -1/q>0$. Set $p := 1/\delta$, $\alpha :=\delta- 1/q$.
Then there exists a constant $C$, such that for all $0\leq s<t < \infty$,%
\begin{equation}\label{lem:variation}
\left\Vert x\right\Vert _{ p \text{-var;}\left[ s,t\right] }\leq C\left\vert t-s\right\vert ^{\alpha}
\left\Vert x\right\Vert _{W^{\delta ,q};\left[ s,t\right] }.
\end{equation}
\end{thm}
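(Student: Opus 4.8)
The plan is to deduce the estimate from the Garsia--Rodemich--Rumsey (GRR) inequality, but applied on \emph{every} subinterval rather than globally, and then to exploit the superadditivity of the Gagliardo energy together with a discrete Hölder inequality; this is what upgrades the trivial Hölder-to-variation bound (which would only give $1/\alpha$-variation) to the sharp exponent $p=1/\delta$. Throughout write
\[
B_{[u,v]} := \int_u^v\!\!\int_u^v \frac{|x(a)-x(b)|^q}{|a-b|^{1+\delta q}}\, da\, db,
\]
so that $B_{[u,v]}^{1/q}$ is the $W^{\delta,q}$-seminorm on $[u,v]$ and in particular $B_{[u,v]}^{1/q}\leq \left\Vert x\right\Vert_{W^{\delta,q};[u,v]}$; since $p$-variation is insensitive to additive constants, only this seminorm will enter the argument. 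I would apply GRR with gauge $\Psi(r)=r^q$ and control function $p(r)=r^{\delta+1/q}$, chosen precisely so that $\int\!\int \Psi(|x(a)-x(b)|/p(|a-b|))$ reproduces $B$. (Finiteness of the right-hand side and the estimate below give a continuous representative, so there is no loss in assuming $x$ continuous, as GRR requires.)

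First I would record the local Hölder estimate. Applying GRR to $x$ restricted to an arbitrary $[u,v]\subseteq[s,t]$ yields
\[
|x(v)-x(u)| \leq 8\int_0^{v-u} \Psi^{-1}\!\left(\frac{4\,B_{[u,v]}}{r^2}\right) dp(r) = C\, B_{[u,v]}^{1/q}\int_0^{v-u} r^{-2/q}\, r^{\delta+1/q-1}\, dr.
\]
The exponent in the final integral is $\delta-1/q-1=\alpha-1>-1$, so it converges and evaluates to a constant multiple of $(v-u)^{\alpha}$, giving the key local bound
\[
|x(v)-x(u)| \leq C\,|v-u|^{\alpha}\, B_{[u,v]}^{1/q}, \qquad [u,v]\subseteq[s,t].
\]

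Now I would pass from this scale-by-scale estimate to $p$-variation. Fix a partition $s=t_0<\dots<t_N=t$ and set $\theta := p/q = 1/(q\delta)$, which lies in $(0,1)$ precisely because the hypothesis $\delta-1/q>0$ forces $q\delta>1$. Raising the local bound to the power $p=1/\delta$ and using $\alpha p=1-\theta$, summing, and then applying the discrete Hölder inequality with conjugate exponents $1/(1-\theta)$ and $1/\theta$, I obtain
\[
\sum_i |x(t_{i+1})-x(t_i)|^p \leq C^p\sum_i |t_{i+1}-t_i|^{1-\theta} B_{[t_i,t_{i+1}]}^{\theta} \leq C^p\Big(\sum_i |t_{i+1}-t_i|\Big)^{1-\theta}\Big(\sum_i B_{[t_i,t_{i+1}]}\Big)^{\theta}.
\]
The first factor equals $|t-s|^{1-\theta}$, and the second is bounded by $B_{[s,t]}^{\theta}$ because the integrand of $B$ is nonnegative and the squares $[t_i,t_{i+1}]^2$ are disjoint subsets of $[s,t]^2$. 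Taking the supremum over partitions, then the $p$-th root, and checking $(1-\theta)/p=\alpha$ while $\theta/p=1/q$, delivers $\left\Vert x\right\Vert_{p\text{-var};[s,t]}\leq C|t-s|^{\alpha} B_{[s,t]}^{1/q}$, which is the claim.

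The routine parts are the exponent bookkeeping and the convergence of the GRR integral. The one conceptual point deserving care — and the structural heart of the argument — is the \emph{superadditivity} of the Gagliardo energies: GRR must be localized so that each increment carries its own local energy $B_{[t_i,t_{i+1}]}$, and it is only the additivity $\sum_i B_{[t_i,t_{i+1}]}\leq B_{[s,t]}$ that, through the discrete Hölder step, recovers the sharp variation exponent $p=1/\delta$ rather than the weaker $1/\alpha$ coming from pointwise Hölder regularity alone.
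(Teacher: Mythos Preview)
Your proof is correct and follows precisely the Garsia--Rodemich--Rumsey route that the paper attributes to \cite{FV06}; in fact the paper does not give its own proof of this embedding but simply cites it, noting that it comes from ``a delicate application of the Garsia--Rodemich--Rumsey inequality''. Your argument --- local GRR on each partition interval, then the discrete H\"older step combined with superadditivity of the Gagliardo energy $B_{[\cdot,\cdot]}$ --- is exactly that delicate application, and the exponent bookkeeping is clean.
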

\noindent The estimates holds for arbitrary continuous paths $x$, even with values in general metric spaces; for us, of course,
$x$ takes values in $\C$. Note that the left-hand side dominates the increment $x_{s,t}$, so that the following (classical) Besov-H\"older embedding
appears as immediate consequence,
\begin{equation}\label{lem:Holder}
\left\Vert x\right\Vert _{\alpha%
\text{-H\"ol;}\left[ s,t\right] }\leq C %\left\vert t-s\right\vert ^{\delta-1/q}
\left\Vert x\right\Vert _{W^{\delta ,q};\left[ s,t\right] }.
\end{equation}

%
%
%
%\begin{eqnarray*}
%W^{\delta ,q} &\subset &C^{p\text{-var}}\cap C^{\alpha \text{-H\"{o}l}} \\
%\text{with }\delta  &\in &\left( 0,1\right) ,q\in \left( 1,\infty \right)
%:\delta -1/q>0 \\
%\text{and }p &=&1/\delta ,\,\alpha =\delta -1/q.
%\end{eqnarray*}%
%The H\"{o}lder embedding here is classical;

The point of Theorem \ref{thm:BesovVariation} is the ``gain'' $p<\frac{%
1}{\alpha }$, which can be substantial for integrability parameter $%
q<<\infty $. For instance $W^{\delta ,2}$ $\left( q=2\right) $ is closely
related to the Cameron--Martin space of fractional Brownian motion (fBm) in
the rough regime with Hurst parameter $H\in (0,1/2]$; in this case $\delta
=H+1/2$ and the above implies that all such paths fall into the reign of
Young integration (which requires $p$-variation with $p<2$), which is
certainly not implied by $\left( H-\varepsilon \right) $-H\"{o}lder
regularity of such Cameron--Martin paths. (This was a crucial ingredient in
the development of Malliavin calculus for rough differential equations
driven by fBm, see e.g. \cite{CF10, CHLT15}.) 

On the other hand, the gain  $\frac{1}{\alpha }-p=O\left( 1/q\right) $
vanishes when $q\uparrow \infty $, which is exactly the reason why Brownian
motion, which has $q$-moments for all $q<\infty $, has $p$-variation
regularity no better than what is implied by its $\alpha $-H\"{o}lder
regularity. This, however, is not the case for SLE and our starting point is
precisely to estimate $q$-moments for increments of SLE\ curves which in
turn leads to a.s. $W^{\delta ,q}$-regularity (and actually some finite
moments of these Besov-norms). By careful booking-keeping, and optimizing
over, the possible choices of $\delta ,q$ (for given $\kappa $) we then
obtain the desired variation and H\"{o}lder regularity of SLE. Surprisingly
perhaps, the intricate correlation structure of SLE\ plays almost no role
here, the entire regularity proof is channeled through knowledge of moments
of the increments of the curve (very much in the spirit of Kolmogorov's
criterion\footnote{%
... which in fact is little more than the embedding $W^{\delta ,q}\subset
C^{\alpha \text{-H\"{o}l}}$.}). At last, our work suggests a viable new route towards (an analytic proof) for existence of SLE trace when $\kappa
=8$, for the Garsia--Rodemich--Rumsey based proof in \cite{FV06} offers the 
flexibility to go beyond the Besov scale and e.g. allows to deal with logarithmic modulus, to be pursued elsewhere.

{\bf{Acknowledgement:}} PKF acknowledges financial support from the European Research Council (ERC) through a Consolidator Grant, nr. 683164. HT acknowledges partial support by NSF grant DMS-1162471.

\section{Moments of the derivative of the inverse flow}
Fix $\kappa\in (0,\infty)$. Let $U_t=\sqrt{\kappa}B_t$, where $B$ is a standard Brownian motion. Let $(g_t)$ be the downward flow SLE, that is, solutions to
$$\partial_t g_t(z)=\frac{2}{g_t(z)-U_t},~~~ g_0(z)=z ~~~\mbox{ for } z\in \mathbb{H},$$
 and let $f_t=g^{-1}_t$ and $\hat{f}_t(z)=f_t(z+U_t)$. Let $\gamma$ be the SLE$_\kappa$ curve. It follows from \cite{RS05} and \cite{LSW04} that a.s. for all $t\geq 0$,
$$\gamma(t)=\lim_{u\to 0^+} \hat{f}_t(iu) .$$
Suppose
$$-\infty <r< r_c:=\frac{1}{2} + \frac{4}{\ka}$$
$$q:=q(r) = r\left(1+\frac{\ka}{4}\right) - \frac{\ka r^2}{8}$$
$$\zeta:=\zeta(r)=r-\frac{\ka r^2}{8}.$$
Through out this note, we always assume $r<r_c$. Note that $q$ is strictly increasing with $r$ on an interval which contains $(-\infty,r_c)$. The following moment estimate will be important. % for us. % is one of the main black boxes we use 
\begin{lemma}\label{lem:der} %derivative moment estimate
There exists a constant $c<\infty$ depending on $r$ such that for all $s,y\in (0,1]$,
$$
\E(|\hat{f}'_s(iy)|^q) \leq \left\{\begin{array}{rl}
c s^{-\zeta/2} y^{\zeta} &\mbox{ when } s\geq y^2, \\
c A_s y^\zeta & \mbox{\ in general ,}\nonumber
\end{array} \right.
$$

where $A_s= \max(s^{-\zeta/2},1)$.
\end{lemma}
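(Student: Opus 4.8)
The plan is to pass to the backward Loewner flow, where $\E[|\hat f_s'(iy)|^q]$ becomes a tractable functional of a one-dimensional diffusion. By the reversibility observation of Rohde--Schramm \cite{RS05} (together with the invariance of $\sqrt{\ka}B$ under time reversal), $|\hat f_s'(iy)|$ has the same law as $|h_s'(iy)|$, where $h$ solves the backward equation $\partial_t h_t(z) = -2/(h_t(z)-U_t)$, $h_0(z)=z$; thus it suffices to bound $\E[|h_s'(iy)|^q]$. Writing $Z_t = h_t(iy)-U_t = X_t + iY_t$, Itô's formula gives $dZ_t = -2 Z_t^{-1}\,dt - \sqrt{\ka}\,dB_t$, so that $Y$ is nondecreasing, $dY_t = 2Y_t|Z_t|^{-2}\,dt$, and
\[ \log|h_t'(iy)| = \int_0^t \frac{2(X_u^2 - Y_u^2)}{|Z_u|^4}\,du . \]
With $O_t := X_t/Y_t$ one has $d(Y_t^2) = 4(1+O_t^2)^{-1}\,dt$, whence the deterministic two-sided control $y^2 \le Y_t^2 \le y^2 + 4t$, which I will use repeatedly.

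The computational heart is the construction of an exact martingale. Reparametrising by $\ell = \log(Y_t/y)$ (legitimate since $Y$ is strictly increasing) turns $O$ into an autonomous diffusion with generator $\cA = \tfrac{\ka}{4}(1+O^2)\partial_O^2 - 2O\,\partial_O$, and $\log|h'|$ into $\int (O^2-1)(O^2+1)^{-1}\,d\ell$. I would then verify that the trial function $\psi(O) = (1+O^2)^{r/2}$ satisfies the eigenrelation
\[ \cA\psi + q\,\frac{O^2-1}{O^2+1}\,\psi = -\zeta\,\psi \]
\emph{exactly} when $q=q(r)$ and $\zeta=\zeta(r)$ are as defined before the lemma; this is where those two expressions come from, the computation reducing to the quadratic $\ka\beta^2 + (\ka+4)\beta + 2q = 0$ solved by $\beta = -r/2$. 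Undoing the time change (and using $1+O^2 = |Z|^2/Y^2$), the Feynman--Kac relation attached to this eigenfunction is precisely the statement that
\[ N_t := Y_t^{\zeta - r}\,|Z_t|^{r}\,|h_t'(iy)|^{q} \]
is a nonnegative local martingale with $N_0 = y^\zeta$. Since a nonnegative local martingale is a supermartingale, $\E[N_t]\le y^\zeta$, i.e.
\[ \E\big[\,Y_s^{\zeta-r}\,|Z_s|^{r}\,|h_s'(iy)|^{q}\,\big] \le y^\zeta \qquad (0<s\le 1). \]
The hypothesis $r<r_c$ enters here, to guarantee that this weighted moment is finite and non-degenerate.

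It remains to remove the weight $W_s := Y_s^{\zeta-r}|Z_s|^{r}$, and this is the step I expect to be the main obstacle: $W_s$ is strongly correlated with $|h_s'|^q$, and although $Y_s$ is typically of order $\sqrt s$ (so that $W_s \gtrsim s^{\zeta/2}$ and the weighted bound collapses to the claimed $s^{-\zeta/2}y^\zeta$), the variable $Y_s$ has no deterministic lower bound better than $y$. To control this I would stop at $\tau := \inf\{t : Y_t \ge \sqrt s\}$. On $\{\tau \le s\}$ the derivative is essentially frozen after $\tau$, because $|\partial_u \log|h_u'|| \le 2/Y_u^2 \le 2/s$ forces $|h_s'| \le e^{2}|h_\tau'|$; applying optional stopping to $N$ at $\tau$ and using $|Z_\tau| \ge Y_\tau = \sqrt s$ then yields the bound $c\,s^{-\zeta/2}y^\zeta$ on this event (for $r\ge 0$, the constant absorbing $e^{2q}$; the remaining signs are bookkeeping on $W$). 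The genuinely delicate contribution is the escape event $\{\tau > s\} = \{Y_s < \sqrt s\}$, on which $Y$ fails to grow precisely because $|X|/Y = |O|$ stayed large, i.e. the backward image drifted close to the real axis; there one must show that the compensating growth of $|Z_s|^{r}$ inside $W_s$ keeps this contribution within $c\,s^{-\zeta/2}y^\zeta$, which is exactly the sharp regime handled in \cite{JVL11}. Finally, the ``in general'' bound with $A_s = \max(s^{-\zeta/2},1)$ is the robust fallback covering $s<y^2$: there $\sqrt s < y \le Y_u$ for all $u\le s$, so $|\log|h_s'||\le 2s/y^2 \le 2$ and $\E[|h_s'|^q]$ is bounded by a constant, which is reconciled with the two stated forms after a short case check on the sign of $\zeta$.
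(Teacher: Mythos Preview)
Your approach is \emph{substantially different} from the paper's, and in a way that makes it harder than it needs to be. The paper does not rebuild the martingale machinery at all: it observes the SLE scaling identity
\[
\hat f_s'(iy)\ \eqd\ \hat f_{s/y^2}'(i),
\]
which collapses the two-parameter problem $(s,y)$ to the one-parameter problem $t:=s/y^2$. For $t\ge 1$ one then simply quotes \cite[Lemma~4.1]{JVL11}, giving $\E[|\hat f'_{t}(i)|^q]\le c\,t^{-\zeta/2}$, and for $t\in(0,1]$ the Koebe distortion theorem bounds $|\hat f'_t(i)|$ by a constant. The three displayed cases (according to the sign of $\zeta$ and whether $s\gtrless y^2$) then fall out in one line each. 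That is the entire argument.

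What you wrote is essentially the \emph{proof of} \cite[Lemma~4.1]{JVL11} carried out at general $y$: the backward-flow supermartingale $N_t=Y_t^{\zeta-r}|Z_t|^r|h_t'(iy)|^q$, the stopping time $\tau=\inf\{t:Y_t\ge\sqrt s\}$, and the ``frozen derivative'' bound on $\{\tau\le s\}$ are all correct and are exactly how that lemma is established. But you then acknowledge that the escape event $\{\tau>s\}$ is the delicate part and defer it to \cite{JVL11}; since the present lemma is already a corollary of that reference, this makes the detour circular. The missing idea is precisely the scaling reduction: once you use it, there is no stopping-time decomposition to perform, no bookkeeping over the sign of $r$, and no escape event to worry about.
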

\noindent
This is just a corollary of \cite[Lemma 4.1]{JVL11} in which they prove that for all $t\geq 1$,
$$\E[|\hat{f}'_{t^2}(i)|^q]\leq c t^{-\zeta}.$$
\noindent
When $t\in (0,1]$, the Koebe distortion theorem implies there is a constant $c$ such that 
$$|\hat{f}'_{t^2}(i)|\leq c.$$
\noindent
By the scaling property of SLE
$$\hat{f}'_s(iy)\eqd \hat{f}'_{s/y^2}(i),$$
hence
$$
\E(|\hat{f}'_s(iy)|^q) = \E(|\hat{f}'_{s/y^2}(i)|^q)\leq \left\{
\begin{array}{rcl}
c s^{-\zeta/2} y^\zeta& \mbox{ when }& s\geq y^2 \\
c \leq c s^{-\zeta/2} y^\zeta & \mbox{ when } & s\leq y^2 \mbox{ and } \zeta>0\\
c \leq c  y^\zeta &\mbox{ when } & s\leq y^2 \mbox{ and } \zeta\leq 0.
\end{array}
\right.
$$

\noindent
We also make use of the following two lemmas.
\begin{lemma}\cite[Lemma 3.5]{JVL11}
\label{key-lemma2}
If $0\leq t-s\leq y^2$ where $y=\Imag(z)$, then
$$|f_t(z)-f_s(z)|\lesssim y |f'_s(z)|.$$
\end{lemma}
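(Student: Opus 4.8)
The plan is to reduce the lemma to a uniform distortion bound on the derivative of the inverse flow over the short interval $[s,t]$. Differentiating the identity $g_u(f_u(z))=z$ in $u$ and inserting the Loewner equation for $g_u$ gives the backward evolution equation
$$\partial_u f_u(z)=-\frac{2f_u'(z)}{z-U_u},$$
so that
$$f_t(z)-f_s(z)=-\int_s^t\frac{2f_u'(z)}{z-U_u}\,du.$$
Since $U_u\in\R$ we have $|z-U_u|\ge\Imag(z)=y$, hence
$$|f_t(z)-f_s(z)|\le\frac{2}{y}\int_s^t|f_u'(z)|\,du\le\frac{2(t-s)}{y}\sup_{u\in[s,t]}|f_u'(z)|.$$
Because $t-s\le y^2$ forces $(t-s)/y\le y$, it suffices to prove $\sup_{u\in[s,t]}|f_u'(z)|\lesssim|f_s'(z)|$; this is the crux.

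I would establish the distortion bound through the composition $f_u=f_s\circ\phi_{s,u}$, where $\phi_{s,u}=g_{s,u}^{-1}$ and $g_{s,u}$ is the Loewner flow run from time $s$ to $u$ (so $g_u=g_{s,u}\circ g_s$, and $g_{s,u}$ removes a hull of half-plane capacity $2(u-s)\le 2y^2$). By the chain rule $f_u'(z)=f_s'(\phi_{s,u}(z))\,\phi_{s,u}'(z)$, and I would bound the two factors by elementary flow estimates. Set $w=\phi_{s,u}(z)$, so $g_{s,u}(w)=z$. Since the forward flow decreases imaginary parts, $\Imag w\ge\Imag z=y$; and from $\partial_v(\Imag g_{s,v}(w))^2\ge-4$ one gets $\Imag w\le\sqrt5\,y$, while $|w-z|=|g_{s,u}(w)-w|\le\int_s^u 2/\Imag g_{s,v}(w)\,dv\le 2(u-s)/y\le 2y$. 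Finally $\partial_v\log g_{s,v}'(w)=-2/(g_{s,v}(w)-U_v)^2$ has modulus at most $2/y^2$ (using $\Imag g_{s,v}(w)\ge y$ for $v\le u$), so $|\log|\phi_{s,u}'(z)||=|\log|g_{s,u}'(w)||\le 2(u-s)/y^2\le 2$ and $|\phi_{s,u}'(z)|\le e^2$.

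It then remains to compare $f_s'(w)$ with $f_s'(z)$. As $f_s$ is univalent on $\H$ (mapping onto the complement domain $\H_s$), I would invoke the Koebe distortion theorem in the form bounding $|f_s'(w)|/|f_s'(z)|$ by a function of the hyperbolic distance $d_{\H}(z,w)$; the estimates $\Imag w\asymp y$ and $|w-z|\lesssim y$ just obtained make $d_{\H}(z,w)$ universally bounded, whence $|f_s'(w)|\asymp|f_s'(z)|$. Combining, $|f_u'(z)|\le e^2\,|f_s'(\phi_{s,u}(z))|\lesssim|f_s'(z)|$ uniformly in $u\in[s,t]$, which closes the argument from the first paragraph.

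The main obstacle is exactly this uniform derivative distortion: $f_s$ is an essentially arbitrary random univalent function, so nothing about $f_s'$ is known pointwise and the comparison of $f_s'(w)$ and $f_s'(z)$ must be purely geometric, routed through Koebe together with the a priori control of the position $\phi_{s,u}(z)$ and the derivative $\phi_{s,u}'(z)$. Everything else --- the backward ODE, the trivial bound $|z-U_u|\ge y$, and the bookkeeping turning $(t-s)\sup_u|f_u'(z)|$ into $y|f_s'(z)|$ via $t-s\le y^2$ --- is routine. It is worth noting that the hypothesis $t-s\le y^2$ enters only to keep $\Imag w$, $|w-z|$ and $\log|\phi_{s,u}'(z)|$ of the right orders and to supply the final factor.
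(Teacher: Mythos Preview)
The paper does not prove this lemma; it merely cites \cite[Lemma 3.5]{JVL11}. Your argument is correct and is essentially the standard proof: the backward ODE $\partial_u f_u(z)=-2f_u'(z)/(z-U_u)$, the decomposition $f_u=f_s\circ\phi_{s,u}$ with the elementary flow bounds on $\phi_{s,u}(z)$ and $\phi_{s,u}'(z)$ over an interval of length $\le y^2$, and the Koebe distortion comparison of $f_s'(\phi_{s,u}(z))$ with $f_s'(z)$ via bounded hyperbolic distance, are all sound.
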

%---------------
\begin{lemma}\cite[Exercise 4]{L12}\label{key-lemma3}
There exist $C>0$ and $l>1$ such that if $h:\H\to \C$ is a conformal transformation, then for all $x\in \R$, $y>0$,
$$|h'(xy+iy)|\leq C(x^2+1)^l |h'(iy)|.$$
\end{lemma}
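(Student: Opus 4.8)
The statement is a distortion estimate, and the plan is to reduce it to the classical growth/distortion theorem for univalent maps of the unit disc. First I would remove the scale: since $z\mapsto z/y$ is an automorphism of $\H$, replacing $h$ by the conformal map $w\mapsto h(yw)$ (whose derivative is $y\,h'(yw)$) turns the claim into the special case $y=1$, namely $|h'(x+i)|\le C(x^2+1)^l\,|h'(i)|$ for every conformal $h\colon\H\to\C$ and every $x\in\R$. This is the only place the homogeneity in $y$ enters, and it is what lets a single pair of constants $C,l$ serve for all $y>0$.

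Next I would push the problem onto the disc through the Cayley transform $\psi(z)=(z-i)/(z+i)$, which maps $\H$ onto $\mathbb{D}$ with $\psi(i)=0$ and $\psi(x+i)=x/(x+2i)=:w$. Then $g:=h\circ\psi^{-1}$ is univalent on $\mathbb{D}$, and applying the distortion theorem to the normalisation $(g-g(0))/g'(0)$ gives
\[ \frac{|g'(w)|}{|g'(0)|}\le \frac{1+|w|}{(1-|w|)^3}\le \frac{2}{(1-|w|)^3}. \]
Writing $h=g\circ\psi$ and using the chain rule, $|h'(x+i)|/|h'(i)|=\big(|g'(w)|/|g'(0)|\big)\cdot\big(|\psi'(x+i)|/|\psi'(i)|\big)$, and since $\psi'(z)=2i/(z+i)^2$ the last factor is the completely explicit $4/(x^2+4)$.

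It remains to track how $w$ approaches $\partial\mathbb{D}$ and to combine the pieces. Rationalising the modulus, $1-|w|=4\big/\big(\sqrt{x^2+4}\,(\sqrt{x^2+4}+|x|)\big)\asymp(1+x^2)^{-1}$, so $(1-|w|)^{-3}\asymp(1+x^2)^3$; feeding this together with $4/(x^2+4)\lesssim(1+x^2)^{-1}$ into the bound above yields $|h'(x+i)|/|h'(i)|\lesssim(1+x^2)^3(1+x^2)^{-1}=(1+x^2)^2$, so that $l=2$ (with an explicit $C$) does the job. The one point that needs care---and the natural place to slip---is the decay rate of $1-|w|$: one must see that it behaves like $1/x^2$ rather than $1/|x|$, which is exactly what the conjugate multiplication above exposes. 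Conceptually there is no deeper obstacle: $\psi$ is a hyperbolic isometry, the hyperbolic distance between $i$ and $x+i$ is $\operatorname{arccosh}(1+x^2/2)\sim 2\log|x|$, and the distortion theorem converts this distance into a multiplicative factor $e^{O(\log|x|)}$, i.e. polynomial growth in $x$, which is precisely the qualitative content of the lemma.
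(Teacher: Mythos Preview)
Your argument is correct: the reduction to $y=1$ by scaling, the transfer to the disc via the Cayley map, and the application of the Koebe distortion theorem all go through as written, and the arithmetic checking that $1-|w|\asymp(1+x^2)^{-1}$ is right, giving the conclusion with $l=2$.

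There is nothing to compare against: the paper does not supply a proof of this lemma but simply quotes it as an exercise from \cite{L12}. Your write-up is exactly the intended solution to that exercise---the only standard route is through Koebe distortion after conjugating to the disc---so there is no alternative approach in the paper to contrast with. One small remark: the paper only uses the existence of \emph{some} $l>1$, so the sharpness of your value $l=2$ is not needed downstream, but it does no harm to record it.
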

%-------------------------------------------------
\section{Moment estimates for SLE increments} %Estimate $\E[|\gamma(t)-\gamma(s)|^q]$}
%Recall that we write $q=q(r)$ and $\zeta = \zeta(r)$, whenever $r$ is fixed. 

We prepare our statement with defining some set of ``suitable'' $r$'s. Unless otherwise stated, we always assume $\kappa \in (0,\infty)$.
\begin{eqnarray*}
I_{0} &:=& I_{0}\left( \kappa \right) :=\left\{ r\in \mathbb{R}%
:r<r_{c}\right\} \text{ with }r_{c}\equiv \frac{1}{2}+\frac{4}{\kappa }, \\
I_{1} &:=& I_{1}\left( \kappa \right) :=\left\{ r\in \mathbb{R}:q>1\right\} 
\text{ with }q=q\left( r\right) =\left( 1+\frac{\kappa }{4}\right) r-\frac{%
\kappa r^{2}}{8}, \\
I_{2} &:=& I_{2}\left( \kappa \right) :=\left\{ r\in \mathbb{R}:q+\zeta
>0\right\} \text{ with }\zeta =\zeta \left( r\right) =  r- \frac{\kappa r^{2}}{8}.
\end{eqnarray*}

\begin{lemma} \label{lem:I1I2I3}
One has $I_{1}=\left( r_{1-},r_{1+}\right) $ with $r_{1\pm }=\frac{\left(
4+\kappa \right) \pm \sqrt{\kappa ^{2}+16}}{\kappa }$ and 
\begin{equation}
0<r_{1-}<r_{c}<r_{1+}.  \label{r1mlerc}
\end{equation}%
Moreover, $I_{2}=\left( 0,2r_{c}\right) ,$ so that%
\begin{equation} \label{def:I}
I := I (\kappa)  := I_{0}\cap I_{1}=I_{0}\cap I_{1}\cap I_{2}=\left( r_{1-},r_{c}\right).
\end{equation}
\end{lemma}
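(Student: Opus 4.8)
The plan is to observe that both $q$ and $q+\zeta$ are quadratics in $r$ with \emph{negative} leading coefficient ($-\kappa/8$ and $-\kappa/4$ respectively); hence $I_1=\{q>1\}$ and $I_2=\{q+\zeta>0\}$ are each the open interval strictly between the two real roots of the corresponding quadratic equation (provided its discriminant is positive). So the whole lemma reduces to computing these roots explicitly and then comparing them with $r_c$.

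For $I_1$ I would solve $q(r)=1$, i.e. $\frac{\kappa}{8}r^2-(1+\frac{\kappa}{4})r+1=0$, equivalently $r^2-(\frac{8}{\kappa}+2)r+\frac{8}{\kappa}=0$. A short discriminant computation gives $(\frac{8}{\kappa}+2)^2-\frac{32}{\kappa}=\frac{4(\kappa^2+16)}{\kappa^2}>0$, and the two roots simplify to exactly $r_{1\pm}=\frac{(4+\kappa)\pm\sqrt{\kappa^2+16}}{\kappa}$. Since $q$ opens downward, $I_1=(r_{1-},r_{1+})$. For $I_2$ the computation is even cleaner: one finds $q+\zeta=(2+\frac{\kappa}{4})r-\frac{\kappa}{4}r^2=\frac{\kappa}{4}\,r\big(\tfrac{8+\kappa}{\kappa}-r\big)$, whose roots are $0$ and $\frac{8+\kappa}{\kappa}=1+\frac{8}{\kappa}=2r_c$; again the parabola opens downward, so $I_2=(0,2r_c)$.

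For the chain of inequalities \eqref{r1mlerc} I would avoid comparing the radical expressions head-on. Positivity $r_{1-}>0$ is immediate from $(4+\kappa)^2=\kappa^2+8\kappa+16>\kappa^2+16$, which gives $4+\kappa>\sqrt{\kappa^2+16}$. To place $r_c$ strictly between the roots it suffices, since $q$ is a downward parabola taking the value $1$ exactly at $r_{1\pm}$, to verify $q(r_c)>1$. Substituting $r_c=\frac{\kappa+8}{2\kappa}$ and factoring yields $q(r_c)=\frac{(\kappa+8)(3\kappa+8)}{32\kappa}$, and $q(r_c)>1$ reduces after clearing denominators to $3\kappa^2+64>0$, trivially true for all $\kappa>0$. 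This gives $r_{1-}<r_c<r_{1+}$.

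Finally I would assemble the intersections. Because $r_{1-}<r_c<r_{1+}$, we get $I_0\cap I_1=(-\infty,r_c)\cap(r_{1-},r_{1+})=(r_{1-},r_c)$. To see that $I_2$ imposes no further constraint, note $(r_{1-},r_c)\subseteq(0,2r_c)=I_2$, since $r_{1-}>0$ and $r_c<2r_c$ (as $r_c>0$); hence $I_0\cap I_1\cap I_2=(r_{1-},r_c)$ as well, which is \eqref{def:I}. The only genuinely delicate point is bookkeeping: one must track the sign of the leading coefficient when passing from $q>1$ to ``strictly between the roots,'' and the cleanest route to $r_c\in I_1$ is through the value $q(r_c)$ rather than a direct comparison of $r_c$ with the radicals defining $r_{1\pm}$.
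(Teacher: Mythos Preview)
Your proof is correct and follows essentially the same approach as the paper: solve the quadratic $q(r)=1$ to obtain $I_1=(r_{1-},r_{1+})$, verify $r_{1-}<r_c<r_{1+}$ by checking $q(r_c)>1$ rather than comparing radicals directly, and factor $q+\zeta$ to read off $I_2=(0,2r_c)$. You supply more explicit detail than the paper (the discriminant, the exact value $q(r_c)=\frac{(\kappa+8)(3\kappa+8)}{32\kappa}$, and the separate check $r_{1-}>0$), but the line of argument is the same.
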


\begin{proof}
Solving a quadratic equation, we see that $I_{1}=\left\{ r:q>1\right\} $ is
of the given form $\left( r_{1-},r_{1+}\right) $. Direct inspection of $%
q\left( r_{c}\right) >1$ implies (\ref{r1mlerc}). At last, $I_{2}$ $\ $is
given by those $r$ for which $\left( 1+\frac{\kappa }{4}\right) r-\frac{%
\kappa r^{2}}{8}+r- \frac{\kappa r^{2}}{8}=r\left( 2+\frac{\kappa }{4}-\frac{%
\kappa }{4}r\right) >0$. It follows that $I_{2}=\left( 0,\frac{4}{\kappa }%
\left( 2+\frac{\kappa }{4}\right) \right) =\left( 0,2r_{c}\right) $.
\end{proof}

\begin{figure}
\centering
\begin{subfigure}{.5\textwidth}
  \centering
  \includegraphics[width=0.8\linewidth]{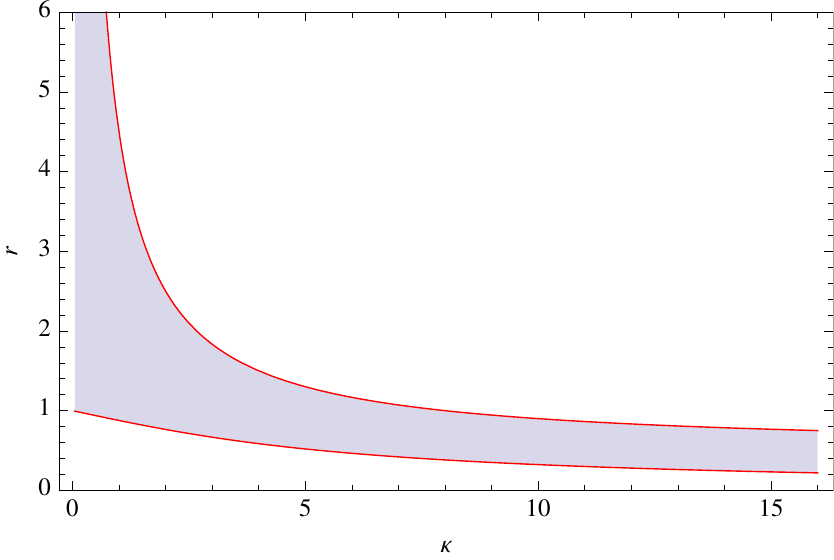}
  %\caption{As on the left, zoomed into region $[0.8,1.2]$}
  \label{fig:sub2}
\end{subfigure}
\caption{Admissible $r$ in the sense of $r \in I$, as function of $\kappa$} 
\label{fig:test}
\end{figure}

\begin{lemma} \label{l:gts}
Let $r \in I = I(\kappa)$, as defined in \eqref{def:I}. 
%With $I = I (\kappa)$ aFix $r< r_c$ such that $q>1$ and $q+\zeta >0$. 
Then, for any $0<s\leq t\leq 1$,
$$\E[|\gamma(t)-\gamma(s)|^q]  \leq C(t-s)^{(q + \zeta)/2} (A_s+t^{-\zeta/2}) + C (t-s)^{\frac{1}{2}(q+\tilde{\zeta}/\theta)}t^{-{\color{red}\tilde{\zeta}}/(2\theta)},$$
%uniformly in $0 < s \leq t \leq 1$, 
where $\tilde{r}\in (r,r_c)$ arbitrarily, $\tilde{q} = q(\tilde{r}), \tilde{\zeta} = \zeta(\tilde{r})$ and $\theta:= \tilde{q} / q >1$, and $C$ is a positive constant depending on $r$ and $\tilde{r}$ only.
\end{lemma}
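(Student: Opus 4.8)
The plan is to fix the scale $y := \sqrt{t-s}$ and to interpolate between $\gamma(s)$ and $\gamma(t)$ through the two auxiliary points $\hat{f}_s(iy)$ and $\hat{f}_t(iy)$, writing
\[
\gamma(t)-\gamma(s) = \bigl(\gamma(t)-\hat{f}_t(iy)\bigr) + \bigl(\hat{f}_t(iy)-\hat{f}_s(iy)\bigr) + \bigl(\hat{f}_s(iy)-\gamma(s)\bigr) =: (\mathrm{I}) + (\mathrm{II}) + (\mathrm{III}),
\]
and to bound the $L^q$-norm of each term separately via $\|a+b+c\|_{L^q}^q \lesssim \|a\|_{L^q}^q + \|b\|_{L^q}^q + \|c\|_{L^q}^q$. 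The two ``vertical'' increments $(\mathrm{I})$ and $(\mathrm{III})$ will produce the first term of the claimed bound, while the ``horizontal'' increment $(\mathrm{II})$, which carries the Loewner time-step together with the shift of the driving function, will produce the second.

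For the vertical pieces, since $\gamma(s)=\lim_{u\to0^+}\hat{f}_s(iu)$ and $\tfrac{d}{du}\hat{f}_s(iu)=i\,\hat{f}'_s(iu)$, I would write $(\mathrm{III}) = i\int_0^y \hat{f}'_s(iu)\,du$ and likewise $(\mathrm{I}) = -i\int_0^y \hat{f}'_t(iu)\,du$. Minkowski's integral inequality (legitimate since $q>1$ for $r\in I\subset I_1$) gives
\[
\bigl\| (\mathrm{III}) \bigr\|_{L^q} \le \int_0^y \E\bigl[|\hat{f}'_s(iu)|^q\bigr]^{1/q}\,du,
\]
and similarly for $(\mathrm{I})$. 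Now I invoke Lemma \ref{lem:der}: for time $t$ one always has $t\ge y^2\ge u^2$ on $u\in(0,y]$, so the sharp estimate $\E[|\hat{f}'_t(iu)|^q]\le c\,t^{-\zeta/2}u^\zeta$ applies and yields $\|(\mathrm{I})\|_{L^q}^q \le C\,t^{-\zeta/2}(t-s)^{(q+\zeta)/2}$, whereas for time $s$ the general bound gives $\|(\mathrm{III})\|_{L^q}^q \le C\,A_s\,(t-s)^{(q+\zeta)/2}$. Both rely on $\int_0^y u^{\zeta/q}\,du<\infty$, i.e. on $q+\zeta>0$, which is exactly $r\in I_2\supseteq I$. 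Together these give the first term.

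The heart of the matter is $(\mathrm{II})$. Here I would use the domain Markov / flow decomposition $f_s = f_t\circ G_{t-s}$, where $G$ is the forward Loewner flow generated by the centered increments $(U_{s+u}-U_s)_{0\le u\le t-s}$, in order to re-express \emph{both} endpoints through $f_t$:
\[
\hat{f}_t(iy)-\hat{f}_s(iy) = f_t(a)-f_t(b), \qquad a := iy+U_t, \quad b := U_s + \hat{G}_{t-s}(iy),
\]
where $\hat{G}$ is $G$ recentered at $U_s$. The structural gain is that $a-b = iy + (U_t-U_s) - \hat{G}_{t-s}(iy)$ depends only on the increments of $U$ after time $s$, hence is independent of $\mathcal{F}_s$, and by Brownian scaling $a-b \eqd y\,\xi$ for a fixed, $\mathcal{F}_s$-independent $\xi$ with moments of all orders. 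Bounding $|f_t(a)-f_t(b)|\le |a-b|\,\sup|f'_t|$ along a connecting path and using the distortion estimate Lemma \ref{key-lemma3} to compare $\sup|f'_t|$ with $|\hat{f}'_t(iy)|$ up to a random, scale-invariant polynomial factor, I reach $|(\mathrm{II})| \lesssim Z\,|\hat{f}'_t(iy)|$ with $Z$ of order $y$. Since $Z$ and $\hat{f}'_t(iy)$ both depend on the post-$s$ evolution and are therefore correlated, I cannot factor the expectation; instead I apply H\"older's inequality with the conjugate exponents $\theta=\tilde q/q>1$ and $\theta'$:
\[
\E\bigl[|(\mathrm{II})|^q\bigr] \le \E\bigl[Z^{q\theta'}\bigr]^{1/\theta'}\,\E\bigl[|\hat{f}'_t(iy)|^{\tilde q}\bigr]^{1/\theta}.
\]
The first factor is $\le C y^q$ by scaling and finiteness of the relevant moments, while the second, by Lemma \ref{lem:der} applied at $\tilde r$ (again with $t\ge y^2$, giving $\E[|\hat{f}'_t(iy)|^{\tilde q}]\le c\,t^{-\tilde\zeta/2}y^{\tilde\zeta}$), contributes $t^{-\tilde\zeta/(2\theta)}y^{\tilde\zeta/\theta}$. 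Multiplying out gives $C\,(t-s)^{\frac12(q+\tilde\zeta/\theta)}\,t^{-\tilde\zeta/(2\theta)}$, the second term.

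The main obstacle is exactly this treatment of $(\mathrm{II})$. Conceptually, estimating the time-step naively through Lemma \ref{key-lemma2} would express it via the derivative at time $s$, producing a prefactor $s^{-\tilde\zeta/2}$ that blows up as $s\downarrow0$; routing through $f_t$ instead replaces this by the harmless $t^{-\tilde\zeta/2}$ (finite since $t\ge y^2$), but destroys the independence that would allow factoring the expectation, which is what forces the auxiliary parameter $\tilde r\in(r,r_c)$ and the H\"older exponent $\theta=\tilde q/q>1$. The genuinely technical point to watch is the control of $\sup|f'_t|$ through the distortion estimate: the path between $a$ and $b$ may dip below imaginary part $y$ (the forward flow $\hat{G}$ decreases heights), so one must either reroute at a controlled height or invoke a Koebe-type comparison of $|\hat{f}'_t(iy')|$ with $|\hat{f}'_t(iy)|$ and absorb the resulting ratio into the finite-moment factor $Z$.
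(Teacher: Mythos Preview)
Your treatment of the vertical pieces $(\mathrm{I})$ and $(\mathrm{III})$ via Minkowski's integral inequality is correct and is a minor (arguably cleaner) variant of the paper's argument, which instead inserts a weight $u^b$ and uses H\"older's inequality in $u$; both routes need only $q+\zeta>0$, i.e.\ $r\in I_2$.

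The difficulty is entirely in your handling of $(\mathrm{II})$, and here you have both overcomplicated matters and left a real gap. The paper does \emph{not} pass through the flow decomposition $f_s=f_t\circ G_{t-s}$. Instead it simply inserts the intermediate point $f_t(iy+U_s)$ and writes
\[
\hat f_t(iy)-\hat f_s(iy)=\bigl[f_t(iy+U_t)-f_t(iy+U_s)\bigr]+\bigl[f_t(iy+U_s)-f_s(iy+U_s)\bigr].
\]
The second bracket is exactly in the scope of Lemma~\ref{key-lemma2} (since $t-s=y^2$) and gives $\lesssim y\,|\hat f_s'(iy)|$, hence $\E[\,\cdot\,]^q\lesssim A_s(t-s)^{(q+\zeta)/2}$; this is absorbed into the \emph{first} term of the claimed bound. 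Your worry that ``Lemma~\ref{key-lemma2} would produce a bad $s^{-\tilde\zeta/2}$'' is a misconception: one uses it at the original exponent $q$, getting the $A_s$ factor that is already present in the statement. The first bracket is a \emph{horizontal} segment at height exactly $y$, so Lemma~\ref{key-lemma3} applies without any height issues, yielding $|\hat f_t'(iy)|$ times a factor $X$ depending only on $|U_t-U_s|/y$; then a single application of H\"older with $\theta=\tilde q/q$ gives the second term. No independence or domain-Markov structure is needed at all.

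By contrast, your route has two genuine problems. First, $b=G_{t-s}(iy+U_s)$ need not even be defined: the point $iy+U_s$ sits at height $y=\sqrt{t-s}$, comparable to the diameter of the hull grown in $[s,t]$, and for $\kappa>4$ it can be swallowed with positive probability. Second, even when $b$ exists, the forward flow pushes its imaginary part strictly below $y$, so the segment $[a,b]$ leaves the horizontal line $\{\operatorname{Im}=y\}$; Lemma~\ref{key-lemma3} then no longer controls $\sup|f_t'|$ by $|\hat f_t'(iy)|$, and the Koebe comparison you allude to gives a ratio like $(y/\operatorname{Im} b)^{O(1)}$ whose moments you have not controlled. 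You flag this obstacle yourself but do not resolve it. The paper's extra splitting avoids it entirely.
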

\begin{proof}
Let $y=(t-s)^{1/2}$. Fix $b$ such that $\frac{q-1}{q}>b>\frac{-\zeta-1}{q}$.
The triangle inequality gives
$$|\gamma(t)-\gamma(s)|^q\lesssim |\gamma(t)-\hat{f}_t(iy)|^q+ |\gamma(s)- \hat{f}_s(iy)|^q + |\hat{f}_t(iy)-\hat{f}_s(iy)|^q.$$
\begin{equation}\label{e:t-s}
 \lesssim |\gamma(t)-\hat{f}_t(iy)|^q+ |\gamma(s)- \hat{f}_s(iy)|^q + |f_t(iy + U_s) - f_s(iy+ U_s)|^q + |f_t(iy+ U_t) - f_t(iy + U_s)|^q
 \end{equation}
We will show that
$$ \E(|\gamma(t)-\hf_t(iy)|^q) \lesssim  t^{-\zeta/2} (t-s)^{(\zeta+q)/2}. $$
Indeed, note that $\gamma(t)=\lim_{u\to 0^+} \hf_t(iu)$ 

%{\color{red}{\small STATE AS THEOREM EARLIER ON? AT LEAST FOR $\kappa \ne 8$, SHOULD BE A CONSEQUENCE OF OUR STUFF ... On p19 in Rohde-Schramm Basic SLE paper they write: ``For $\kappa = 8$, we are unable to prove Theorem 3.6. However, a weaker result does follow easily, namely, a.s. for almost every $ t \ge 0$, the limit $\lim_{y\to0} \hat{f}_t (iy)$ exists.'' THAT IS GOOD NEWS, SO THERE IS NO RESTRICTION ON $\kappa$ HERE, AND IN PARTICULAR WE DON'T RELY ON THE UST FOR $\kappa = 8$.}}

$$\begin{array}{rcll}
 |\gamma(t)-\hf_t(iy)|^q&\leq & (\int^y_0 |\hf'_t(iu)|du)^q&\\
 &=&  (\int^y_0 |\hf'_t(iu)|u^b \cdot u^{-b} du)^q\\
&\leq &(  \int^y_0 |\hf'_t(iu)|^q u^{bq} du ) (\int^y_0 u^{-\frac{bq}{q-1}})^{q-1} & \mbox{ by H{\"o}lder's inequality and } q> 1\\
&\lesssim& (  \int^y_0 |\hf'_t(iu)|^q u^{bq} du ) y^{q-1-bq} &\mbox{ since } q-1-bq>0. \\
\end{array}$$
So
$$\begin{array}{rcll}
 \E(|\gamma(t)-\hf_t(iy)|^q) &\lesssim &\left(  \int^y_0 \E(|\hf'_t(iu)|^q) u^{bq} du \right) y^{q-1-bq}\\
&\lesssim &(  \int^y_0 t^{-\zeta/2}u^{\zeta} u^{bq} du ) y^{q-1-bq} & \mbox{ by Lemma \ref{lem:der} and since } t\geq t-s\geq u^2\\
&\lesssim & t^{-\zeta/2} y^{\zeta+bq+1} y^{q-1-bq} &\mbox{ since } \zeta+bq>-1\\
&=&t^{-\zeta/2} (t-s)^{(\zeta+q)/2}.\\
\end{array}$$
%\E( |\gamma(t)-\hf_t(iy)|^q)
In a similar way, we will attain 
\begin{equation}
\E(|\gamma(s)-\hf_s(iy)|^q) \lesssim A_s (t-s)^{(\zeta+q)/2}.
\end{equation}
Next
$$\begin{array}{rcl}
|f_t(iy + U_s) - f_s(iy+ U_s)|^q&\lesssim & \left(y |\hf'_s(iy)|\right)^q ~~~\mbox{ by Lemma \ref{key-lemma2}}.
	\end{array}
$$
Therefore by Lemma \ref{lem:der}
\begin{equation}
\E(|f_t(iy + U_s) - f_s(iy+ U_s)|^q)\lesssim y^q\E( |\hf'_s(iy)|^q)\lesssim A_s (t-s)^{(\zeta+q)/2}.
\end{equation}
Now for the last term in (\ref{e:t-s})
$$
\begin{array}{rcl}
|f_t(iy+ U_t) - f_t(iy + U_s)|^q&\leq & \left(|U_t-U_s| \sup_{w\in [iy, iy+U_s-U_t]} |\hat{f}'_t(w)|\right)^q\\
&\lesssim & \left(|U_t-U_s| |\hat{f}'_t(iy)| ((|U_t-U_s|/y)^2+1)^l\right)^q~~~\mbox{ by Lemma \ref{key-lemma3}}\\
&\lesssim & |\hf'_t(iy)|^q |U_t-U_s|^q( (|U_t-U_s|/y)^{2lq}+1).
\end{array}$$
So let $X=|U_t-U_s|^q( (|U_t-U_s|/y)^{2lq}+1)$, and with $\theta=\tilde{q}/q>1$ for some $\tilde{r}\in (r,r_c)$ one has
$$
\begin{array}{rcl}
\E(|f_t(iy+ U_t) - f_t(iy + U_s)|^q)&\lesssim &  \E\left( |\hf'_t(iy)|^q X\right)\\
&\lesssim & (\E|\hf'_t(iy)|^{q\theta})^{1/\theta} \left( \E X^{\theta^*} \right)^{1/\theta^*},\\
\end{array}
$$
where $\theta^*=\frac{\theta}{\theta-1}$. Now note that
$$\E (X^{\theta*}) \lesssim  y^{q\theta^*}.$$
So
$$\begin{array}{rcl}
\E(|f_t(iy+ U_t) - f_t(iy + U_s)|^q) & \lesssim & y^q (\E|\hf'_t(iy)|^{q\theta})^{1/\theta}\\
&\lesssim &\dfrac{y^{q+\zeta(\tilde{r})/\theta}}{t^{ \tilde{\zeta}/(2\theta)}}= \dfrac{(t-s)^{\frac{1}{2}(q+ \zeta(\tilde{r})/\theta)}}{t^{ \tilde{\zeta}/(2\theta)}}.
\end{array}$$
\end{proof}
\noindent

\section{Besov regularity of SLE}

%Let us organize the various assumption on $r$ and set%
%\begin{eqnarray*}
%I_{0} &:=& I_{0}\left( \kappa \right) :=\left\{ r\in \mathbb{R}%
%:r<r_{c}\right\} \text{ with }r_{c}\equiv \frac{1}{2}+\frac{4}{\kappa }, \\
%I_{1} &:= &I_{1}\left( \kappa \right) :=\left\{ r\in \mathbb{R}:p>1\right\} 
%\text{ with }p=p\left( r\right) =\left( 1+\frac{\kappa }{4}\right) r-\frac{%
%\kappa r^{2}}{8}, \\
%I_{2} &:= &I_{2}\left( \kappa \right) :=\left\{ r\in \mathbb{R}:p+\zeta
%>0\right\} \text{ with }\zeta =\zeta \left( r\right) =\text{ }\zeta \left(
%r\right) \equiv r-\kappa \frac{r^{2}}{8}. \\
%I &:= &I\left( \kappa \right) :=I_{0} \cap I_{1}\cap I_{2}
%\end{eqnarray*}

For each $\delta>0, q\geq 1$ and for each measurable $\phi:[a,b]\to \mathbb{C}$, define its Besov (or fractional Sobolev) semi-norm  as
\begin{equation} \label{equ:sob}
||\phi||_{W^{\delta, q};[a,b]} =\left( \int^b_a\int^b_a \frac{|\phi(t)-\phi(s)|^q}{|t-s|^{1+\delta q}}\,ds\,dt\right)^{1/q}.
\end{equation}
The space of $\phi$ with $||\phi||_{W^{\delta, q};[a,b]} < \infty$ is a Banach-space, denoted by $W^{\delta,q} = W^{\delta,q} ([a,b])$.
%
%This is indeed a seminorm. But if we consider functions that have the same value at $a$, then it becomes a norm.

\noindent
Lemma \ref{l:gts} applies provided $r\in I$ and $\tilde{r}$ $\in
\left( r,r_{c}\right) $. We can then obtain $W^{\delta,q}$-regularity for SLE trace, restricted to some interval $[\varepsilon,1]$, provided we find 
$\delta, q$ such that%
\begin{equation*}
\mathbb{E}\left\Vert \gamma \right\Vert _{W^{\delta ,q};\left[ \varepsilon ,1%
\right] }^{q}=\int_{\varepsilon}^{1}\int_{\varepsilon}^{1}\frac{ \mathbb{E}[  \left\vert \gamma _{t}-\gamma_{s}\right\vert ^{q}]}{\left\vert t-s\right\vert ^{1+\delta q}}dsdt<\infty.
\end{equation*}%
Though our focus is $\varepsilon=0$, the case $\varepsilon > 0$, say  $\varepsilon \in (0,1)$, has noteworthy features and relates to a phase transition at $\kappa=1$. 
%previously observed in \cite{????} in the context of SLE H\"older regularity.
Observe also that%
\begin{equation*}
\tilde{\zeta}\rightarrow \zeta ,\tilde{q}\rightarrow q,\theta \rightarrow 1%
\text{ as }\tilde{r}\downarrow r\text{.}
\end{equation*}
(Loosely speaking, by choosing $\tilde{r}$ sufficiently close to $r$, we can work with the limiting values whenever it comes to power-counting arguments.)
Note that, as a consequence of Lemma \ref{lem:I1I2I3},  the condition $r \in I$ implies ${\zeta+q} > 0$ and then, with $q$ positive (in fact, $q= q(r)>1$ by definition of $I_1$), %implied a fortiori by $r \in I_1$), 
\begin{equation}  \exists \delta \in (0, \frac{\zeta+q}{2q} ).  \label{equ:exist_delta}
\end{equation}
To deal with the behaviour at $s,t$ near $0^+$, we further introduce  %the following set
\[
J_{1} = J_{1}\left( \kappa \right) :=\left\{ r\in \mathbb{R}:\zeta \left(
r\right) <2\right\}.
\]
\begin{lemma}  \label{lem:J1}
$I \cap J_1 = I$ for $\kappa >1$. 
\end{lemma}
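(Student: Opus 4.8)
The plan is to reduce the claimed set identity to a single scalar inequality. Since $I \cap J_1 = I$ is equivalent to the inclusion $I \subseteq J_1$, and $J_1 = \{r : \zeta(r) < 2\}$, it suffices to verify that $\zeta(r) < 2$ for every $r \in I$. In fact I expect to prove the stronger (and cleaner) statement that $\zeta(r) < 2$ holds for \emph{all} $r \in \mathbb{R}$ when $\kappa > 1$, i.e.\ $J_1 = \mathbb{R}$, from which $I \cap J_1 = I \cap \mathbb{R} = I$ is immediate and no information about the endpoints $r_{1-}, r_c$ of $I$ is required.

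The key step is to maximize the concave quadratic $\zeta(r) = r - \tfrac{\kappa r^2}{8}$. Its leading coefficient $-\kappa/8$ is negative, so $\zeta$ has a unique global maximum at the critical point $r^\ast = 4/\kappa$ (solving $\zeta'(r) = 1 - \tfrac{\kappa r}{4} = 0$), with maximal value $\zeta(r^\ast) = \tfrac{4}{\kappa} - \tfrac{2}{\kappa} = \tfrac{2}{\kappa}$. Thus $\sup_{r \in \mathbb{R}} \zeta(r) = 2/\kappa$.

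One then concludes by comparing this supremum against the threshold $2$: for $\kappa > 1$ we have $2/\kappa < 2$, hence $\zeta(r) \leq 2/\kappa < 2$ for every real $r$; this gives $J_1 = \mathbb{R}$ and the asserted identity follows at once.

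There is no substantive obstacle here, the entire argument being elementary calculus on a parabola. The only point worth flagging is the sharpness of the hypothesis $\kappa > 1$: at $\kappa = 1$ the global maximum of $\zeta$ equals exactly $2$, attained at $r = 4$, so $J_1$ would drop the single point $r = 4$ and the argument via $J_1 = \mathbb{R}$ would break down. This is precisely the phase transition at $\kappa = 1$ alluded to above, and it explains why the lemma is phrased for $\kappa > 1$ rather than $\kappa \geq 1$.
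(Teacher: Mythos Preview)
Your argument is correct and essentially the same as the paper's: both show that the global maximum of the concave quadratic $\zeta(r)=r-\tfrac{\kappa r^2}{8}$ equals $2/\kappa$, which is strictly less than $2$ when $\kappa>1$, so that $J_1=\mathbb{R}$ and hence $I\cap J_1=I$. The paper obtains the bound by completing the square, $\zeta(r)=-\tfrac{\kappa}{8}\bigl(r-\tfrac{4}{\kappa}\bigr)^2+\tfrac{2}{\kappa}\le \tfrac{2}{\kappa}$, while you locate the vertex via the derivative; these are equivalent one-line computations.
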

\begin{proof} Obvious from $\zeta \left( r\right) \equiv r- \frac{%
\kappa r^{2}}{8}=-\frac{\kappa }{8}\left( r-\frac{4}{\kappa }\right) ^{2}+\frac{2}{\kappa } \le \frac{2}{\kappa }.$
\end{proof}  

%Note that the following proposition applies to $\kappa = 8$. 

\begin{thm} \label{thm:preBesovA}
Assume $\kappa > 0$. If $r \in I \cap J_1$, $q=q(r)$ and $\delta$ is picked according to \eqref{equ:exist_delta}, then
\[  \mathbb{E}\left\Vert \gamma \right\Vert _{W^{\delta ,q};\left[ 0,1\right] }^{q}  <\infty. 
\]
For $\kappa \in (0,1]$, under the weaker assumption $r \in I$ one still has
\[
\mathbb{E} \left\Vert \gamma \right\Vert_{W^{\delta ,q};\left[ \varepsilon ,1\right] }^{q}<\infty ,\text{ for any } \varepsilon \in (0,1] .
\]
\end{thm}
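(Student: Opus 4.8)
The plan is to push the expectation inside the double integral defining the Besov seminorm. Since the integrand is nonnegative, Tonelli's theorem and symmetry in $(s,t)$ give
\[
\E\left\Vert \gamma \right\Vert _{W^{\delta ,q};[0,1]}^{q}=\int_{0}^{1}\int_{0}^{1}\frac{\E[|\gamma_{t}-\gamma_{s}|^{q}]}{|t-s|^{1+\delta q}}\,ds\,dt = 2\int_{0}^{1}\int_{0}^{t}\frac{\E[|\gamma_{t}-\gamma_{s}|^{q}]}{(t-s)^{1+\delta q}}\,ds\,dt .
\]
On the region $0<s\le t\le 1$ I then insert the bound of Lemma \ref{l:gts}, reducing the problem to showing that three explicit deterministic double integrals are finite. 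Writing $\beta := (q+\zeta)/2-1-\delta q$, the choice \eqref{equ:exist_delta} is exactly the statement $\beta>-1$, and this is precisely what controls integrability along the diagonal $s=t$ in each term.

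For the first two terms I would first observe that on $0<s\le t\le 1$ one has $A_s+t^{-\zeta/2}\le 2A_s$ (if $\zeta>0$ then $A_s=s^{-\zeta/2}\ge t^{-\zeta/2}$, while if $\zeta\le 0$ then $A_s=1\ge t^{-\zeta/2}$ since $t\le1$). This collapses them into the single integral $\int_{0}^{1}\int_{0}^{t}(t-s)^{\beta}\,A_s\,ds\,dt$. When $\zeta\le 0$ we have $A_s=1$ and finiteness is immediate from $\beta>-1$. When $\zeta>0$ we have $A_s=s^{-\zeta/2}$, and the substitution $s=tu$ turns the inner integral into $t^{\beta+1-\zeta/2}B(\beta+1,1-\zeta/2)$; here the Beta integral converges precisely because $\beta+1>0$ and $1-\zeta/2>0$, the latter being exactly the condition $r\in J_1$, while the remaining $t$-integral converges since one checks $\beta+1-\zeta/2=q(1/2-\delta)>-1$, again guaranteed by $\zeta<2$. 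Thus $J_1$ is what tames the $s\to 0^{+}$ singularity produced by $A_s$.

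For the third term I would first exploit the freedom in $\tilde r$: because $\tilde\zeta\to\zeta$ and $\theta\to 1$ as $\tilde r\downarrow r$, the diagonal exponent $\tfrac12(q+\tilde\zeta/\theta)-1-\delta q$ tends to $\beta>-1$, so choosing $\tilde r$ close enough to $r$ keeps it strictly above $-1$. Pulling the $s$-free factor $t^{-\tilde\zeta/(2\theta)}$ out of the inner integral and integrating the diagonal singularity leaves $\int_0^1 t^{\,q(1/2-\delta)}\,dt$, which converges since $\delta<1/2+\zeta/(2q)<1/2+1/q$ under $\zeta<2$. Hence all three contributions are finite, establishing the $[0,1]$ statement for $r\in I\cap J_1$.

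Finally, for the $[\varepsilon,1]$ statement I would repeat the computation with all integrals restricted to $s,t\in[\varepsilon,1]$. The decisive simplification is that every negative power of $s$ or $t$ occurring above, namely $A_s$, $t^{-\zeta/2}$ and $t^{-\tilde\zeta/(2\theta)}$, is now bounded by a constant depending on $\varepsilon$; the near-origin analysis, and with it the need for $J_1$, disappears, and only the diagonal condition $\beta>-1$, i.e. $r\in I$, is used. This argument is valid for every $\kappa$, but by Lemma \ref{lem:J1} it adds nothing when $\kappa>1$ (there $I=I\cap J_1$), which is why it is recorded only for $\kappa\in(0,1]$. I expect the main obstacle to be the bookkeeping of the two near-origin singularities in the first part: ensuring that the $s\to0$ blow-up of $A_s$ and the $t\to0$ blow-up of the capacity-type factors are \emph{simultaneously} integrable, which is exactly the role played by the constraint $\zeta<2$.
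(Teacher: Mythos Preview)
Your argument is correct and follows essentially the same route as the paper: push the expectation inside by Tonelli, insert the increment bound of Lemma~\ref{l:gts}, and then power-count the diagonal and near-origin singularities separately, with $J_1$ (i.e.\ $\zeta<2$) controlling the latter. Your treatment is in fact more explicit than the paper's sketch: the reduction $A_s+t^{-\zeta/2}\le 2A_s$ and the Beta-function substitution $s=tu$ make the interplay of the two singularities fully transparent, whereas the paper simply asserts separate integrability of the origin factors $\max(1,s^{-\zeta/2},t^{-\zeta/2},t^{-\tilde\zeta/(2\theta)})$ and of the diagonal factors.
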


\begin{figure}
\centering
\begin{subfigure}{.5\textwidth}
  \centering
  \includegraphics[width=.8\linewidth]{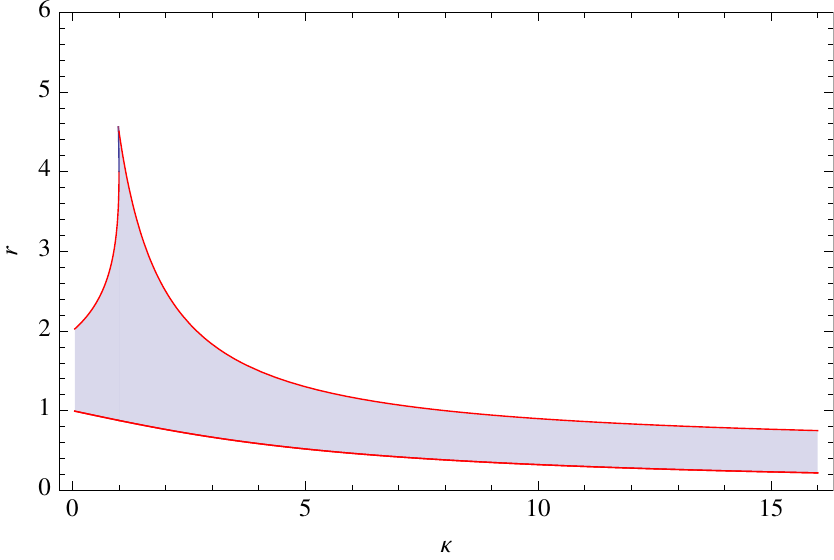}
    \caption{$\kappa \in [0,16]$}
  \label{fig:sub1}
\end{subfigure}%
\begin{subfigure}{.5\textwidth}
  \centering
  \includegraphics[width=.8\linewidth]{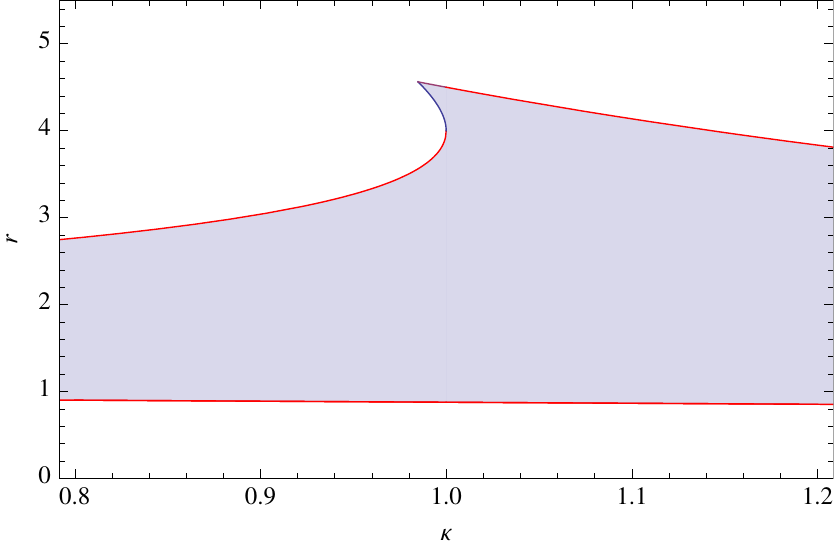}
  \caption{As on the left, zoomed into region $[0.8,1.2]$}
  \label{fig2:sub2}
\end{subfigure}
\caption{Admissible $r$ in the sense of $r \in I \cap J_1$, as function of $\kappa$}
\label{fig:test}
\end{figure}
\begin{remark} The first part of this proposition applies to $\kappa = 8$. \end{remark}
\begin{remark} The conditions on $r$ can be fully spelled out. For instance, when $\kappa >1$, then $r \in I = I \cap J_1 $ iff $ r \in \left( r_{1-},r_{c}\right) =
\left(\kappa^{-1} (4+\kappa  \pm \sqrt{\kappa ^{2}+16}), 1/2 + 4/ \kappa\right)$, cf. Lemma \ref{lem:I1I2I3} above. When $\kappa \le 1$, one takes additionally into account $r \notin [j_{1-},j_{1+}]$ with 
\begin{equation}j_{1\pm} = 4 \kappa^{-1} (1 \pm \sqrt{1-\kappa}),    \label{equ:defj1pm}
\end{equation}
 obtained from 
solving the quadratic inequality $\zeta <2$.
%$r \in I \cap J_1 \in I$ iff $\kappa^{-1} ( 4 + \kappa - \sqrt{16+\kappa^2}) < r < 1/2 + 4/\kappa$,
%cf.We shall detail the case $\kappa \in (0,1]$ in Section **??** below. 
%In view of lemma \ref{lem:J1}, this is also applies to $r \in I \cap J_1$ whenever $\kappa \ge 1$. (We shall return to the case 
 \end{remark}

\begin{proof}
For $r\in I$ and $\tilde{r}\in \left( r,r_{c}\right) $, Lemma \ref{l:gts} estimates 
$\mathbb{E}\left\vert \gamma _{t}-\gamma _{s}\right\vert ^{q}$ in terms of
factors which are singular at $s=t=0$, these will be controlled thanks to $%
J_{1}$, and factors which are singular at the diagonal $s=t$, to be
controlled via the bound \eqref{equ:exist_delta}.
More specifically, $r\in J_{1}$ guarantees the
integrability of $\max \left( 1,s^{-\zeta /2},t^{-\zeta /2},t^{-\tilde{\zeta}%
/\left( 2\theta \right) }\right)$. Indeed this is obvious for $s^{-\zeta
/2},t^{-\zeta /2}$ since the integrability-at-$0^{+}$-condition ($-\zeta
/2>-1$) is precisely guaranteed by $\zeta \left( r\right) <2$. The same
is true for the exponent $-\tilde{\zeta}/\left( 2\theta \right) =-\tilde{%
\zeta}/\left( 2\tilde{q}/q\right) $, upon choosing $\tilde{r}$ close enough
to $r$. A similar power-counting argument applies to $\left\vert
t-s\right\vert ^{\frac{1}{2}(q+\zeta )}$ resp. $\left\vert t-s\right\vert ^{%
\frac{1}{2}(q+\tilde{\zeta}/\theta )}$. Taking into account factor $%
\left\vert t-s\right\vert ^{1+\delta q}$ which appears in the definition of
the $W^{\delta ,q}$-norm, the integrability-at-diagonal-condition becomes%
\begin{equation*}
\frac{1}{2}(q+\zeta )-\left( 1+\delta q\right) >-1
\end{equation*}%
which is precisely what is guaranteed by \eqref{equ:exist_delta}.
\end{proof}
\newpage
\section{Optimal $p$-variation regularity of SLE}
For each $p\geq 1$, and each continuous function $\phi$ defined on an interval $[a,b]$, we define its $p$-variation as
$$||\phi||_{p-var;[a,b]}=\left(\sup_{\cP} \sum^{\# \cP}_{i=1} |\phi(t_i)-\phi(t_{i-1})|^p\right)^{1/p}$$
where the supremum is taken over partitions $\cP=\{t_0,\cdots, t_n\}$ of $[a,b]$. 

\noindent
We would like to apply the embedding in (\ref{lem:variation}). To do that, define ($J_1$ repeated for the reader's convenience)
\begin{eqnarray*}
J_{1} &:=&J_{1}\left( \kappa \right) :=\left\{ r\in \mathbb{R}:\zeta \left(
r\right) <2\right\}, \\
J_{2} &:=&J_{2}\left( \kappa \right) =\left\{ r\in \mathbb{R}:\zeta \left(
r\right) +q\left( r\right) >2\right\}.
\end{eqnarray*}
Observe that $r\in J_{2}$ is precisely equivalent to  
\begin{equation}  \label{J2delta}
\exists \delta \in \left( \frac{1}{q\left( r\right) },\frac{\zeta \left(
r\right) +q\left( r\right) }{2q\left( r\right) }\right) . 
\end{equation}

Write $(a,b)$ for an open interval (of $\R$), and agree further that $(a,b) = \emptyset$ when $a=b$.

\begin{lemma} \label{lem:J2} (i) $r \in J_2$ iff it is an element in the open interval with endpoints $1, 8/ \kappa$ (and empty for $\kappa = 8$).
 
 \noindent (ii) $r \in I \cap J_2 $ iff $ r \in (1,r_c) \equiv (1, 1/2+ 4 / \kappa)$, in case $\kappa < 8$, and $ r \in (8/\kappa, r_c)$ for $\kappa >8$.
 
 \noindent (iii) With $j_{1\pm}$ as introduced in (\ref{equ:defj1pm}),
 $$I\cap J_1\cap J_2 = \left\{ \begin{array}{lcl}
 (1,j_{1-})\cup (\min(j_{1+},r_c),r_c) &\mbox{ when } &\kappa\in (0,1], \\
 (1,r_c)  & \mbox{ when } & \kappa \in (1,8),\\
 \emptyset & \mbox{ when } & \kappa = 8,\\
 (8/\kappa, r_c) & \mbox{ when } & \kappa\in (8,\infty).
 \end{array}\right.$$
 (See Figure \ref{fig3:sub2}, and also Figure \ref{fig2:sub2} for a zoom, just below $\kappa=1$, where $(\min(j_{1+},r_c),r_c) \ne \emptyset$.)
\end{lemma}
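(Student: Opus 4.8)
The plan is to reduce the whole statement to elementary quadratic algebra in $r$, with $\kappa>0$ a parameter, using that $\zeta$, $q$ and $\zeta+q$ are all (concave) quadratics. First I would treat (i) by computing
\[
\zeta(r)+q(r)=\left(2+\frac{\kappa}{4}\right)r-\frac{\kappa}{4}r^{2},
\]
so the defining inequality $\zeta+q>2$, after multiplying by $-4/\kappa<0$ and collecting terms, reads $r^{2}-(1+\tfrac{8}{\kappa})r+\tfrac{8}{\kappa}<0$, i.e. $(r-1)(r-8/\kappa)<0$. This identifies $J_{2}$ as the open interval with endpoints $1$ and $8/\kappa$, which degenerates to $\emptyset$ exactly when $8/\kappa=1$, that is $\kappa=8$.

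For (ii) I would intersect $J_{2}$ with $I=(r_{1-},r_{c})$ from Lemma \ref{lem:I1I2I3}, which amounts only to ordering the four endpoints $r_{1-},r_{c},1,8/\kappa$ on the line. Each comparison is a one-line inequality: $r_{1-}<1$ reduces to $\kappa^{2}>0$; $r_{1-}<8/\kappa$ reduces to $\kappa-4<\sqrt{\kappa^{2}+16}$ (trivial if $\kappa\le4$, and by squaring otherwise); while both $r_{c}\gtrless1$ and $r_{c}\gtrless8/\kappa$ are governed by the sign of $\kappa-8$, via $r_{c}=\tfrac12+\tfrac{4}{\kappa}$. Assembling these, for $\kappa<8$ one gets $r_{1-}<1<r_{c}<8/\kappa$, whence $I\cap J_{2}=(1,r_{c})$; for $\kappa>8$ one gets $r_{1-}<8/\kappa<r_{c}<1$, whence $I\cap J_{2}=(8/\kappa,r_{c})$; and $\kappa=8$ gives $\emptyset$.

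For (iii) I would split on whether $J_{1}=\{\zeta<2\}$ is all of $\mathbb{R}$. Since $\zeta$ is concave with maximum $\zeta(4/\kappa)=2/\kappa$, one has $J_{1}=\mathbb{R}$ for $\kappa>1$ (cf. Lemma \ref{lem:J1}), so $I\cap J_{1}\cap J_{2}=I\cap J_{2}$ and the ranges $\kappa\in(1,8)$, $\kappa=8$, $\kappa>8$ follow directly from (ii). For $\kappa\le1$, solving $\zeta(r)<2\iff\kappa r^{2}-8r+16>0$ yields $J_{1}=\mathbb{R}\setminus[j_{1-},j_{1+}]$ with the roots $j_{1\pm}$ of \eqref{equ:defj1pm}; combined with $I\cap J_{2}=(1,r_{c})$ from (ii) (as $\kappa<8$), the task is to remove $[j_{1-},j_{1+}]$ from $(1,r_{c})$. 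Here I would record that
\[
j_{1-}=\frac{4}{1+\sqrt{1-\kappa}}\in(2,4]\subset(1,r_{c}),
\]
using $r_{c}\ge9/2$ on $(0,1]$, so the left root always sits strictly inside; the set difference is then $(1,j_{1-})\cup(\min(j_{1+},r_{c}),r_{c})$, with the $\min$ absorbing both $j_{1+}\ge r_{c}$ (second piece empty) and $j_{1+}<r_{c}$ (second piece nonempty).

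The main obstacle — indeed the only subtle point — is the position of the right root $j_{1+}$ relative to $r_{c}$ for $\kappa$ near $1$: the sign of $j_{1+}-r_{c}$ changes at a threshold $\kappa$ (solving $8\sqrt{1-\kappa}=\kappa$), which is the phase transition alluded to around $\kappa=1$ and visible in Figure \ref{fig2:sub2}. I would deliberately avoid pinning down this threshold explicitly, letting the $\min(j_{1+},r_{c})$ in the statement handle both regimes uniformly, and verify only that $j_{1-}$ lands strictly between $1$ and $r_{c}$ so that the first interval $(1,j_{1-})$ is genuinely present in all of $\kappa\in(0,1]$.
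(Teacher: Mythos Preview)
Your proposal is correct and follows essentially the same approach as the paper: the paper's proof records only the factorization $\zeta(r)+q(r)-2=-\tfrac{\kappa}{4}(r-1)(r-\tfrac{8}{\kappa})$ for (i)--(ii) and the single inequality $1<j_{1-}<r_{c}$ for (iii), leaving the remaining endpoint comparisons implicit. You have simply made all of those comparisons explicit, including the handling of $j_{1+}$ versus $r_{c}$ via the $\min$, which the paper suppresses.
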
 
\begin{proof}
Part (i) and (ii) come from the fact that 
$$\zeta(r)+q(r)-2 = -\frac{\kappa}{4}(r-1)(r-\frac{8}{\kappa}).$$
For part (iii), the case $\kappa\in (0,1]$ follows from the fact that
$$1<j_{1-}<r_c.$$
The other cases are straightforward.
\end{proof}
\begin{thm} \label{thm:Pvar}
If $r \in I \cap J_1 \cap J_2$, $q=q(r)$,  then for all $\delta$ as in \eqref{J2delta} and $p := 1/\delta$, we have
\[  \mathbb{E}\left\Vert \gamma \right\Vert _{p-var;\left[ 0,1\right] }^{q}  <\infty. 
\]
Optimization over the range of admissible $r$, shows that one can take any
  $$
  p > p_* := q(\min (1, 8 / \kappa)) = \min (1 + \kappa / 8, 2).
  $$                
\end{thm}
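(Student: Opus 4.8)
\textbf{Proof plan for Theorem \ref{thm:Pvar}.}

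The plan is to combine the Besov regularity of Theorem \ref{thm:preBesovA} with the Besov--variation embedding of Theorem \ref{thm:BesovVariation}, and then to optimize the resulting bound over the admissible parameter $r$. First I would fix $r \in I \cap J_1 \cap J_2$ and $q = q(r) > 1$. The condition $r \in J_2$ is, by construction, exactly equivalent to the existence of $\delta$ in the open interval $\left( \frac{1}{q}, \frac{\zeta + q}{2q} \right)$, as recorded in \eqref{J2delta}. I would pick such a $\delta$ and set $p := 1/\delta$. The two defining inequalities $\delta > 1/q$ and $\delta < \frac{\zeta+q}{2q}$ play distinct roles: the upper bound $\delta < \frac{\zeta+q}{2q}$ is precisely condition \eqref{equ:exist_delta}, which (since also $r \in I \cap J_1$) makes Theorem \ref{thm:preBesovA} applicable, giving $\mathbb{E}\left\Vert \gamma \right\Vert_{W^{\delta,q};[0,1]}^{q} < \infty$; meanwhile the lower bound $\delta > 1/q$ is exactly the hypothesis $\delta - 1/q > 0$ needed to invoke Theorem \ref{thm:BesovVariation} with $p = 1/\delta$.

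With both ingredients in place, the embedding \eqref{lem:variation} applied on $[s,t] = [0,1]$ yields a pointwise (in $\omega$) bound
\[
\left\Vert \gamma \right\Vert_{p\text{-var};[0,1]} \leq C \left\Vert \gamma \right\Vert_{W^{\delta,q};[0,1]},
\]
where $C$ depends only on $\delta, q$ and the factor $|t-s|^{\alpha} = 1$ on the unit interval. Raising to the $q$-th power and taking expectations, finiteness of the Besov moment immediately transfers to finiteness of $\mathbb{E}\left\Vert \gamma \right\Vert_{p\text{-var};[0,1]}^{q}$. This establishes the first assertion for every admissible $(r, \delta)$.

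For the quantitative statement, I would optimize the exponent $p = 1/\delta$ over all admissible choices. Since we want $p$ as small as possible, we want $\delta$ as large as possible, i.e. $\delta \uparrow \frac{\zeta+q}{2q}$, so the best attainable variation exponent from a given $r$ is $p = \frac{2q}{\zeta+q}$ (approached, not attained, consistent with the strict inequality $p > p_*$). I would then minimize this over $r \in I \cap J_1 \cap J_2$. Using $\zeta(r) = r - \kappa r^2/8$ and $q(r) = (1+\kappa/4)r - \kappa r^2/8$, the minimizing $r$ is driven to the relevant endpoint of the admissible interval described in Lemma \ref{lem:J2}(iii): as $r \downarrow 1$ (for $\kappa < 8$) or $r \downarrow 8/\kappa$ (for $\kappa > 8$) one checks directly that $\frac{2q}{\zeta+q} \downarrow q(\min(1, 8/\kappa))$, since at such endpoints $\zeta + q = 2$ by the factorization $\zeta + q - 2 = -\frac{\kappa}{4}(r-1)(r - 8/\kappa)$ from the proof of Lemma \ref{lem:J2}. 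Evaluating $q$ at $r = \min(1, 8/\kappa)$ gives $p_* = \min(1 + \kappa/8, 2)$, as claimed. The main point requiring care is that the endpoints lie on the boundary of $J_2$ (where $\zeta + q = 2$ forces $\delta = 1/q$, i.e. $p = q$, violating the strict embedding requirement); thus the optimal value $p_*$ is genuinely a strict infimum, which is exactly why the conclusion reads $p > p_*$ rather than $p = p_*$. I would verify that for every $p > p_*$ one can indeed select an interior $r$ (and corresponding $\delta$) realizing that $p$, so the family of statements collectively covers the whole range $p > p_*$.
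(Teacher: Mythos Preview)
Your proposal is correct and follows essentially the same route as the paper: combine Theorem~\ref{thm:preBesovA} with the Besov--variation embedding \eqref{lem:variation}, then minimize $p = 2q/(\zeta+q)$ over $r \in I \cap J_1 \cap J_2$ and evaluate at the left endpoint $r_{\min} = \min(1,8/\kappa)$ using $\zeta+q=2$ there. The only point the paper makes more explicit is \emph{why} the infimum is attained at the left endpoint: it observes that $\zeta/q = 1 + \frac{\kappa/4}{\kappa r/8 - (1+\kappa/4)}$ is strictly decreasing in $r$ on $I$, hence $2q/(\zeta+q) = 2/(1+\zeta/q)$ is strictly increasing, whereas you assert this monotonicity (``the minimizing $r$ is driven to the relevant endpoint'') without justification --- you should add this one-line check.
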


\begin{figure}
\centering
\begin{subfigure}{.5\textwidth}
  \centering
  \includegraphics[width=.8\linewidth]{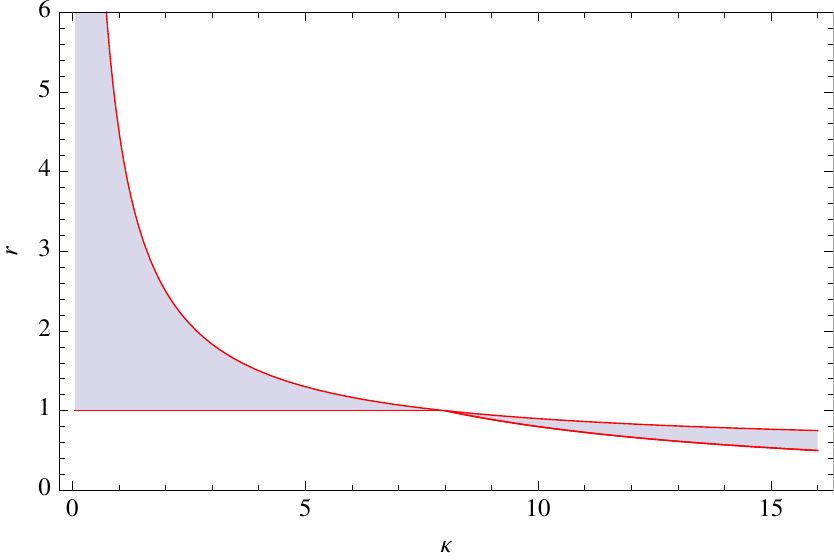}
    \caption{$r \in I \cap J_2$, as function of $\kappa$}
  \label{fig:sub1}
\end{subfigure}%
\begin{subfigure}{.5\textwidth}
  \centering
  \includegraphics[width=.8\linewidth]{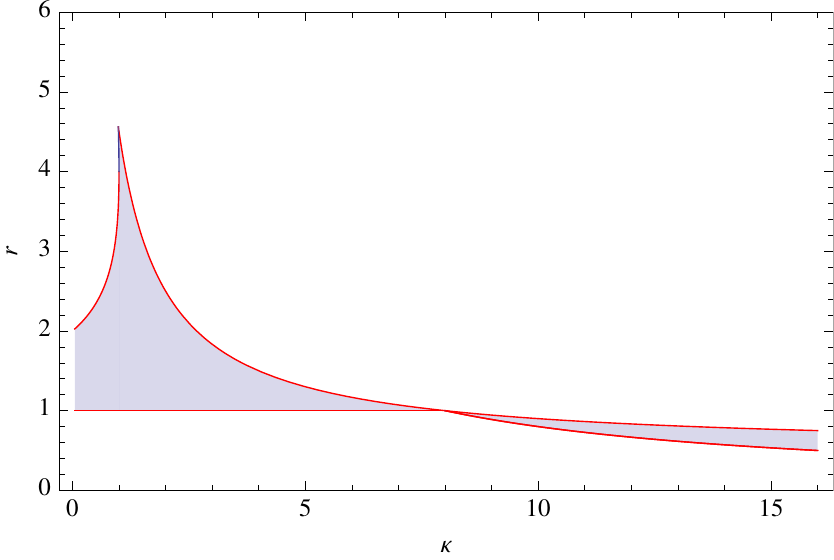}
  \caption{$r \in I \cap J_1 \cap J_2$, as function of $\kappa$}
  \label{fig3:sub2}
\end{subfigure}
\caption{Admissible $r$ for Theorems \ref{thm:Pvar} and \ref{thm:Hoel}. Note $I \cap J_2 = \emptyset$ when $\kappa = 8$.}
\label{fig:test}
\end{figure}

\begin{proof} The first statement follows immediately from (\ref{lem:variation}), Theorem \ref{thm:preBesovA} and by noting that
$$ (\frac{1}{q},\frac{\zeta+q}{2q})\subset (0,1)$$
when $r\in I$. 
The infimum of all possible $p$ is
$$p_*=\inf_{r\in I\cap J_1\cap J_2} \frac{2q}{\zeta+q}.$$
 
\noindent One sees that 
$$\frac{\zeta}{q}=1+\frac{\frac{\ka}{4}}{\frac{\ka}{8}r - (1+\frac{\ka}{4})}$$ 
is a decreasing function and so 
$$\dfrac{2q}{\zeta+q} = \dfrac{2}{\zeta / q + 1}$$
is small when $r$ is small. With Lemma \ref{lem:J2}, it is then easy to see that the ``optimal $r$'' is given by 
\begin{equation} \label{e:rmin}
r_{\min} := \inf ( I \cap J_1 \cap J_2) = \min (1, 8 / \kappa).
\end{equation}
Hence,
when $\ka\in (0,8)$,
$$p_* =  \frac{2q(r)}{\zeta(r) + q(r)}\bigg|_{r=1} = q(r)|_{r=1}= 1 + \frac{\ka}{8}.$$
When $\ka\in (8,\infty)$,
$$p_* = \left. \frac{2q(r)}{\zeta(r) + q(r)}\right\vert_{r=\frac{8}{\ka}} = q(\frac{8}{\ka})=2.$$

\end{proof}

\begin{corollary}[Hausdorff dimension upper bound; \cite{RS05}] \label{cor:dim}
$\dim _{H}\left( \gamma |_{\left[ 0,1\right] }\right) \le \min \left( 1+\kappa /8,2\right).$
\end{corollary}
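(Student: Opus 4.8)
\section*{Proof proposal for Corollary \ref{cor:dim}}

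The plan is to deduce the dimension bound directly from the finite $p$-variation already established in Theorem \ref{thm:Pvar}, via the classical deterministic fact that a continuous path of finite $p$-variation has an image of Hausdorff dimension at most $p$.

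First I would recall the covering argument behind this fact. Given a continuous path $x$ on $[0,1]$ with $\left\Vert x\right\Vert _{p\text{-var};[0,1]}<\infty$, the map $(s,t)\mapsto \omega(s,t):=\left\Vert x\right\Vert _{p\text{-var};[s,t]}^{p}$ is a continuous, superadditive control with $\omega(0,1)=:M<\infty$. Since $t\mapsto \omega(0,t)$ is continuous, nondecreasing, and runs from $0$ to $M$, one may choose $t_i$ with $\omega(0,t_i)=iM/n$; superadditivity then gives $\omega(t_{i-1},t_i)\le \omega(0,t_i)-\omega(0,t_{i-1})=M/n$ on each piece. Hence the image $x([t_{i-1},t_i])$ has diameter at most $\left\Vert x\right\Vert _{p\text{-var};[t_{i-1},t_i]}=\omega(t_{i-1},t_i)^{1/p}\le (M/n)^{1/p}$, so the $n$ sets $x([t_{i-1},t_i])$ cover the trace by sets whose $s$-dimensional contents sum to at most $n\,(M/n)^{s/p}=M^{s/p}\,n^{1-s/p}$. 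For any $s>p$ this tends to $0$ as $n\to\infty$, whence the $s$-dimensional Hausdorff measure of the trace vanishes and $\dim_{H}(x([0,1]))\le p$.

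Next I would transfer this to the SLE trace. Theorem \ref{thm:Pvar} shows that for every fixed $p>p_{\ast}=\min(1+\kappa/8,2)$ one has $\mathbb{E}\left\Vert \gamma\right\Vert _{p\text{-var};[0,1]}^{q}<\infty$ for a suitable $q>1$, in particular $\left\Vert \gamma\right\Vert _{p\text{-var};[0,1]}<\infty$ almost surely. Fixing a countable sequence $p_{n}\downarrow p_{\ast}$, the almost-sure finiteness holds simultaneously for all $p_{n}$ on a single event of full probability (a countable intersection). On that event the covering bound yields $\dim_{H}(\gamma|_{[0,1]})\le p_{n}$ for every $n$, and letting $n\to\infty$ gives $\dim_{H}(\gamma|_{[0,1]})\le p_{\ast}=\min(1+\kappa/8,2)$, as claimed.

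There is no serious obstacle here: the entire probabilistic content is already packaged in Theorem \ref{thm:Pvar}, and the dimension estimate is a soft, deterministic covering argument. The only points requiring a little care are the passage from moment finiteness (valid for each fixed $p$) to a single full-probability event valid along the whole sequence $p_{n}\downarrow p_{\ast}$, and the standard verification that $\omega$ is a genuine continuous superadditive control so that the equal-variation partition exists; both are routine and cause no trouble.
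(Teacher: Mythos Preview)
Your proof is correct and follows the same strategy as the paper: invoke Theorem \ref{thm:Pvar} to get finite $p$-variation for every $p>p_*$, use the deterministic fact that a continuous path of finite $p$-variation has image of Hausdorff dimension at most $p$, and let $p\downarrow p_*$. The only cosmetic difference is in how that deterministic fact is justified: the paper reparametrizes $\gamma$ to a $(1/p)$-H\"older curve and applies the standard estimate $\dim_H(\tilde\gamma([0,1]))\le p\,\dim_H([0,1])$, whereas you give the equivalent direct covering argument via equal-$\omega$ partitions (which is, of course, exactly what underlies the reparametrization).
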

\begin{proof}
By a property of $p$-variation, the map $\gamma |_{\left[ 0,1\right]
}$ can be reparametrized to a $\delta$-H\"older map $\tilde{\gamma}$, with $\delta=1/p$, so that by basic facts of Hausdorff dimension of sets under H\"older maps,
\begin{equation*}
\dim _{H}\left( \gamma |_{\left[ 0,1\right] }\right) =\dim _{H} (\tilde{%
\gamma}) \leq \frac{1}{\delta }\dim _{H}\left(
[0,1]\right) =p\text{.}
\end{equation*}%
Take $p\downarrow p_{\ast }=\min \left( 1+\kappa /8,2\right) $ to recover
the stated upper bound on the Hausdorff dimension of SLE$_{\kappa }$. \end{proof} 

This upper bound was first derived by Rohde--Schramm \cite{RS05}; equality was later established by Beffara \cite{Be08} 
which in turn shows that our $p$-variation result, any $p>p_*$, is indeed optimal. 

%holds (\cite{Be??}.) 
%but this does not follow from our approach. % it was recently seen that equality holds.%

\section{Optimal H{\"o}lder exponent}
Recall that for each $\alpha\in (0,1]$, the $\alpha$-H\"older semi-norm of a continous function $\phi$ defined on an interval $[a,b]$ is
$$||\phi||_{\alpha\text{-H\"ol};\left[a,b\right]} = \sup_{s\neq t\in [a,b]} \frac{|\phi(s)-\phi(t)|}{|s-t|^\alpha}.$$

We follow the same logic as in the previous section, again apply the embedding in (\ref{lem:variation}), which is possible exactly when $r \in I \cap J_1 \cap J_2$. 
As a consequence, we recover the (optimal) SLE H\"older regularity of  \cite[Theorem 1.1]{JVL11},
with the novelty of having some control over moments. 

\begin{thm} \label{thm:Hoel}
If $r \in I \cap J_2$, $q=q(r)$, then for all $\delta$ as in \eqref{J2delta},  and $\alpha := 1/\delta - q$, we have
\[  \mathbb{E}\left\Vert \gamma \right\Vert _{\alpha \text{-H\"ol};\left[ \varepsilon,1\right] }^{q}  <\infty
\]
for any $\varepsilon \in (0,1]$.
Optimization over the range of admissible $r$ shows that one can take any H\"older exponent
\begin{equation*}
\alpha < \alpha _{\ast }\left( \kappa \right) =1-\frac{\kappa }{24+2\kappa -8\sqrt{%
\kappa +8}}.
\end{equation*}
If $r \in I \cap J_1 \cap J_2$, everything else as above, then
\[  \mathbb{E}\left\Vert \gamma \right\Vert _{\alpha \text{-H\"ol};\left[0,1\right] }^{q}  <\infty
\]
and here one can take any H\"older exponent $\alpha < \min (\alpha _{\ast }, 1/2)$.
\end{thm}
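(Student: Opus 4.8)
The plan is to follow the exact template of the proof of Theorem~\ref{thm:Pvar}, merely replacing the Besov--variation embedding by its Besov--H\"older specialization \eqref{lem:Holder}. For $0\le s<t$ that inequality reads $\|\gamma\|_{\alpha\text{-H\"ol};[s,t]}\le C\,\|\gamma\|_{W^{\delta,q};[s,t]}$ with $\alpha=\delta-1/q$, so once a finite $q$-th moment of the Besov seminorm is available on the relevant interval, raising to the power $q$ and taking expectations yields the stated H\"older moment bound. The admissibility of $(\delta,q)$ is pinned down by two competing requirements: positivity of the H\"older exponent forces $\delta>1/q$, while the Besov moment bound coming from Theorem~\ref{thm:preBesovA} (through \eqref{equ:exist_delta}) forces $\delta<(\zeta+q)/(2q)$. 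These are jointly satisfiable exactly when $1/q<(\zeta+q)/(2q)$, i.e. $\zeta+q>2$, which is the defining condition of $J_2$; this is precisely the window \eqref{J2delta} and explains why $J_2$ appears in the hypotheses.

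For the first (restricted) statement on $[\varepsilon,1]$ I would argue that $J_1$ is \emph{not} needed, because the sole function of $J_1$ in the proof of Theorem~\ref{thm:preBesovA} was to render the factors that blow up as $s,t\downarrow 0$ integrable, and on $[\varepsilon,1]$ those factors are merely bounded. Concretely, for $\kappa>1$ Lemma~\ref{lem:J1} gives $I\cap J_1=I$, so $r\in I\cap J_2\subset I\cap J_1$ and the first part of Theorem~\ref{thm:preBesovA} already yields a finite Besov moment on all of $[0,1]$; for $\kappa\le 1$ the second part of Theorem~\ref{thm:preBesovA} supplies the finite Besov moment on $[\varepsilon,1]$ under the bare assumption $r\in I$. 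In either case $r\in I\cap J_2$ delivers $\mathbb{E}\|\gamma\|_{W^{\delta,q};[\varepsilon,1]}^q<\infty$, and \eqref{lem:Holder} closes the $[\varepsilon,1]$ bound. For the second statement on $[0,1]$ one must reach the endpoint $0$, so I would additionally impose $r\in J_1$ and invoke the first part of Theorem~\ref{thm:preBesovA} directly on $[0,1]$; intersecting with the window $J_2$ gives exactly the hypothesis $r\in I\cap J_1\cap J_2$.

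It remains to optimize the exponent. Pushing $\delta$ up to its supremum $(\zeta+q)/(2q)$ gives, for fixed $r$, the best achievable exponent $g(r):=(\zeta+q-2)/(2q)=\tfrac12+(\zeta-2)/(2q)$, so the optimal H\"older exponent is $\sup_r g(r)$ over the admissible range. For $[\varepsilon,1]$ the range is $r\in I\cap J_2$, which by Lemma~\ref{lem:J2} is an interval on whose left endpoint $g$ vanishes (that endpoint being a root of $\zeta+q-2$); hence the supremum is attained at an \emph{interior} critical point, in contrast with the boundary optimization of Theorem~\ref{thm:Pvar}. Setting $g'(r)=0$ reduces to $\zeta'(r)\,q(r)=q'(r)\,(\zeta(r)-2)$, a quadratic condition in $r$ whose solution, after simplification, produces the radical $\sqrt{\kappa+8}$ and yields $\alpha_*(\kappa)=1-\kappa/(24+2\kappa-8\sqrt{\kappa+8})$. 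For the $[0,1]$ statement the extra constraint $\zeta<2$ (i.e. $J_1$) is active only when the unconstrained optimizer violates it, which happens for small $\kappa$; on the boundary $\zeta=2$ one computes $g=q/(2q)=1/2$, so the constrained supremum is $\min(\alpha_*,1/2)$, and since $\alpha_*(1)=1/2$ this cap switches on exactly at the phase transition $\kappa=1$.

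I expect the main obstacle to be this last step: carrying out the interior optimization cleanly, checking that the critical point genuinely lies in $I\cap J_2$ (so that it is an attained maximum and is not cut off by the endpoints $1$, $8/\kappa$, or $r_c$), and matching the resulting algebraic expression to the closed form $\alpha_*(\kappa)$. By contrast, the reduction to Besov regularity and the bookkeeping of which admissibility conditions are active on $[\varepsilon,1]$ versus $[0,1]$ are routine once Theorems~\ref{thm:BesovVariation} and~\ref{thm:preBesovA} are in hand.
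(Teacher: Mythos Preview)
Your approach is essentially the paper's: deduce the H\"older moment from Theorem~\ref{thm:preBesovA} via the embedding \eqref{lem:Holder}, use the window \eqref{J2delta} (whence the role of $J_2$), and optimize $g(r)=(\zeta+q-2)/(2q)$ over $I\cap J_2$ (resp.\ $I\cap J_1\cap J_2$). The paper carries out the interior optimization you anticipate by noting $g'(r)\ge 0$ iff $r\in[r_-,r_+]$ with $r_\pm=4(-2\pm\sqrt{8+\kappa})/\kappa$, checking $r_+\in I\cap J_2$, and evaluating $g(r_+)=\alpha_*(\kappa)$; for $\kappa\le 1$ on $[0,1]$ it verifies $j_{1-}\le r_+<\min(j_{1+},r_c)$ so the constrained supremum is $g(j_{1\pm})=1/2$, exactly your boundary argument at $\zeta=2$.
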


\begin{proof}
The statements about finiteness of moments are immediate by Theorem \ref {thm:preBesovA} and the Besov-Holder embedding (\ref{lem:Holder}). We can take any exponent $\alpha<\hat\alpha$, where
$\hat\alpha$ is the supremum of $\frac{1}{\delta}-q$ with $\delta$ as in \eqref{J2delta} and with $r\in I\cap J_2$ or $r\in I\cap J_1\cap J_2$ depending on whether we consider $\left\Vert \gamma \right\Vert _{\alpha \text{-H\"ol};\left[ \varepsilon,1\right] }$ or $\left\Vert \gamma \right\Vert _{\alpha \text{-H\"ol};\left[ 0,1\right] }$. Thus,
$$\hat\alpha = \sup_{r} \frac{\zeta+q-2}{2q}.$$

\begin{figure}
\centering
\begin{subfigure}{.5\textwidth}
  \centering
  \includegraphics[width=.8\linewidth]{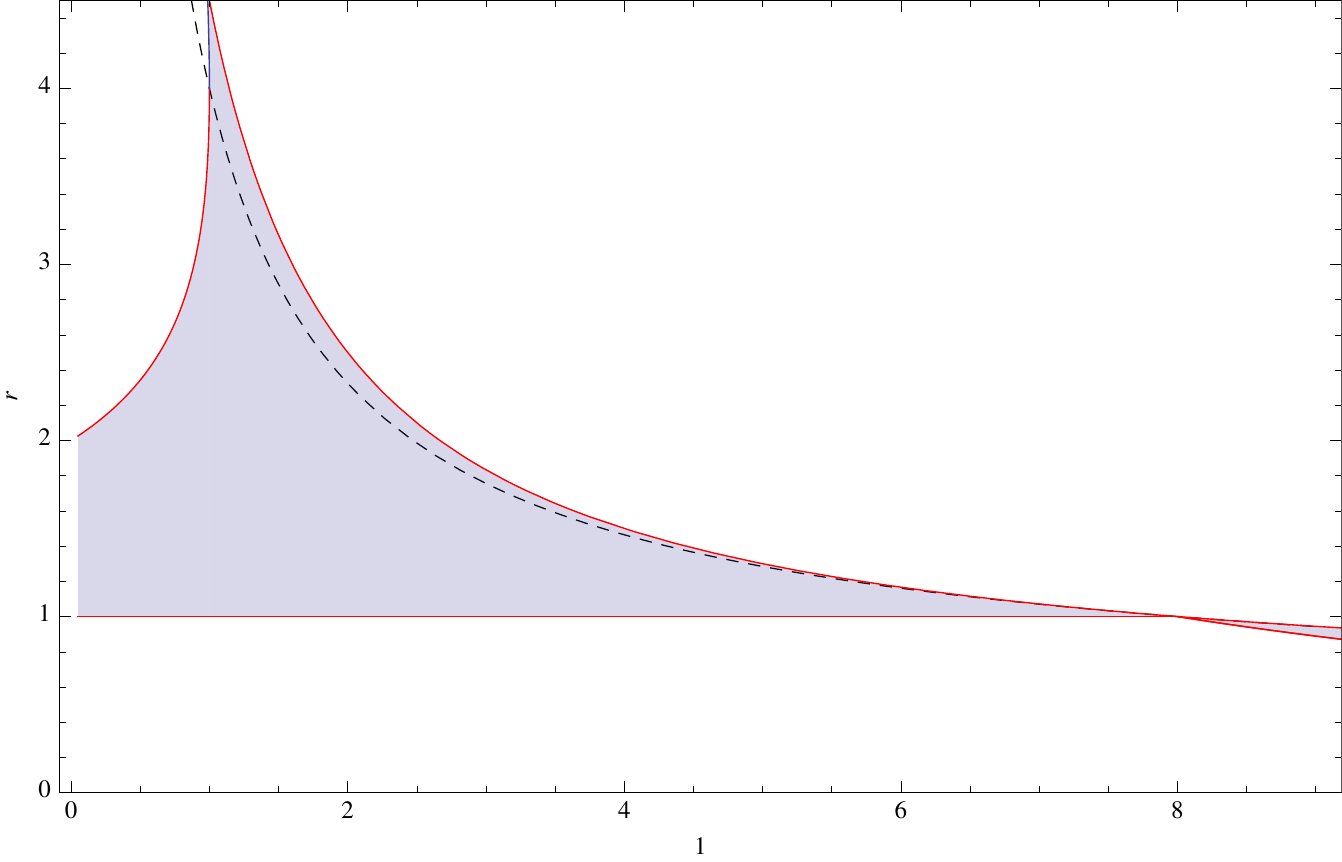}
    \caption{$r \in I \cap J_1 \cap J_2$, with $\kappa \in [0,9]$}
  \label{fig:sub1}
\end{subfigure}%
\begin{subfigure}{.5\textwidth}
  \centering
  \includegraphics[width=.8\linewidth]{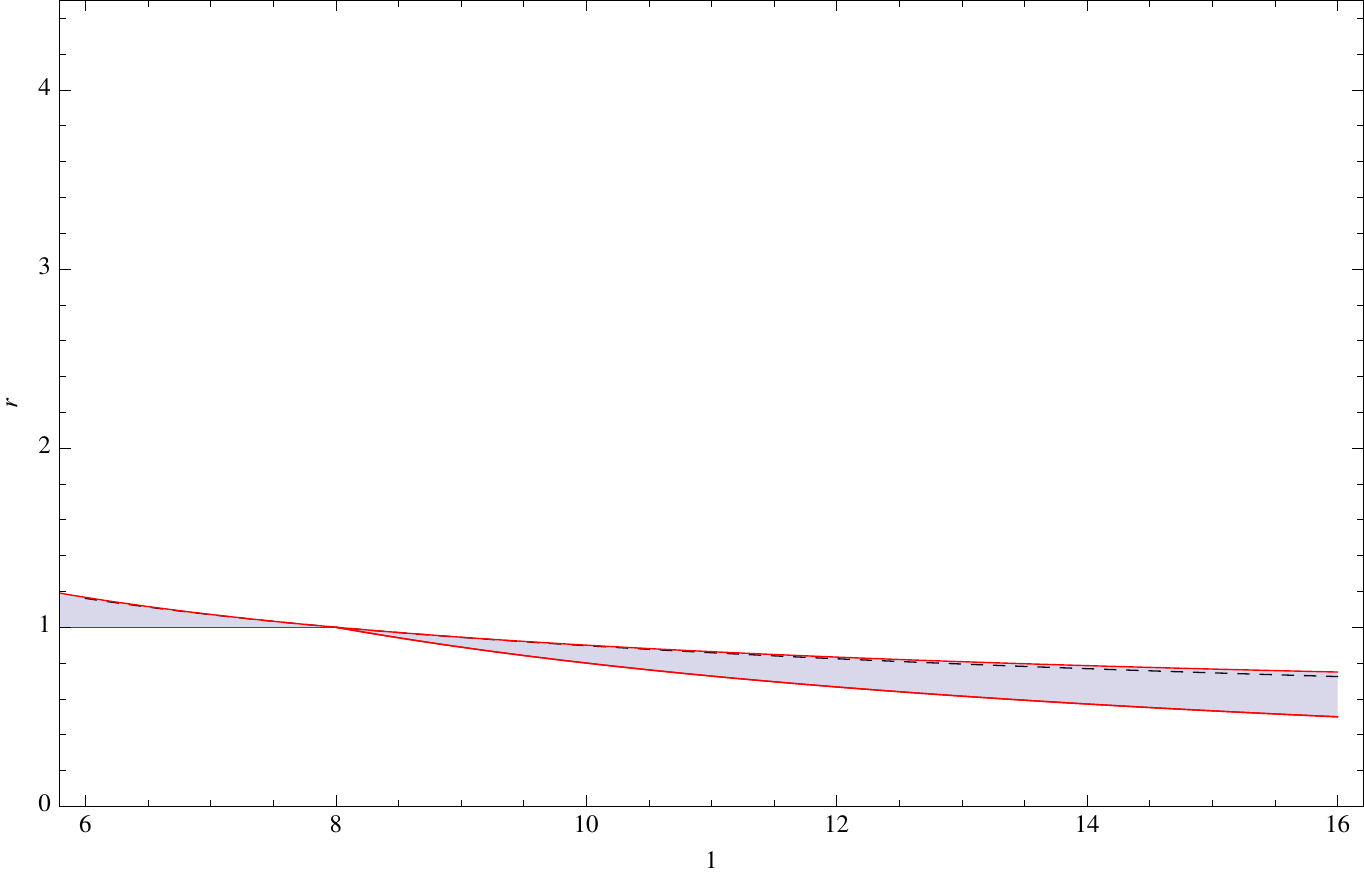}
  \caption{As on the left, $\kappa \in [6,16]$}
  \label{fig:sub2}
\end{subfigure}
\caption{Dashed line for $r=r(\kappa) \in I \cap J_2$ which maximizes H\"older exponent}
\label{fig:test2}
\end{figure}

\noindent
Observe that the function $\phi(r)=\frac{\zeta+q-2}{2q}$ satisfies
$$\phi'(r)\geq 0 \Leftrightarrow r\in [r_-,r_+]$$
where $r_{\pm} = \frac{4(-2\pm \sqrt{8+\kappa})}{\kappa}$.

\noindent
Consider the case $\kappa\in (1,\infty)\backslash\{8\}$. By Lemma  \ref{lem:J2},
$$I\cap J_1\cap J_2=I\cap J_2 = \left\{
\begin{array}{rcl}
(1, r_c) &\mbox{ when } &\kappa <8,\\
(8/\kappa, r_c) &\mbox{ when } &\kappa >8.\\
\end{array}
\right.$$
One can check that $r_-<0$ and $r_+\in I\cap J_2$. Hence
$$\hat\alpha = \phi(r_+) = 1-\frac{\kappa}{24+2\kappa - 8\sqrt{\kappa+8}}.$$

\noindent
Consider the case $\kappa\in (0,1]$. Concerning $\left\Vert \gamma \right\Vert _{\alpha \text{-H\"ol};\left[ \varepsilon,1\right] }$, the conclusion does not change:
$$\sup_{r\in I\cap J_2} \phi(r)= \sup_{r\in (1,r_c)} \phi(r) = \phi(r_+)=\alpha_*(\kappa).$$
Concerning $\left\Vert \gamma \right\Vert _{\alpha \text{-H\"ol};\left[0,1\right] }$, note that
$$1<j_{1-}\leq r_+ < \min (j_{1+},r_c)\leq r_c$$
and that  by Lemma \ref{lem:J2},
$$I\cap J_1\cap J_2 = (1, j_{1-})\cup (\min(j_{1+},r_c),r_c).$$
Therefore,
$$\sup_{r\in I\cap J_1\cap J_2} \phi(r)=\max\{\phi(j_{1-}),\phi(j_{1+})\}=\frac{2+q-2}{2q}=\frac{1}{2}=\min(\alpha_*,1/2).$$
\end{proof}

\section{Further discussion} 

{\bf Quantified finite $q$-moments, $p$-variation case} Fix $  p > p_*= \min (1 + \kappa / 8, 2)$ so that, according to Theorem \ref{thm:Pvar}, there exists $q>1$ so that
\[  \mathbb{E}\left\Vert \gamma \right\Vert _{p-var;\left[ 0,1\right] }^{q}  <\infty. 
\]
How large can we take $q$? Our method allow here to identify a range of finite $q$-moments, with $q \in [1,Q)$ with $Q=Q(p,\kappa)$ .

Since $q$ is strictly increasing, for any $p>p_*$, a possible choice is $Q = Q_* := q(\min(1,8/\kappa)) = \min(1+\kappa/8,2)$. Giving up on pleasant formulae, one can do better. 
Fixing $p>p_*$, we can take 
$$Q=\sup_r q(r)$$
where $r$ satisfies $r\in I\cap J_1\cap J_2$ and that $\frac{2q(r)}{\zeta(r)+q(r)}<p<q(r)$. We let $Q=0$ if there is no such $r$.

Let $\phi(r)=\frac{2q(r)}{\zeta(r)+q(r)}$ and note
\begin{itemize}
\item $\phi(r)$ and $q(r)$ are strictly increasing on $I$,
\item $p_*=\inf_{r\in I\cap J_1\cap J_2} \phi(r) = \phi(r_{\min})$,
\item $\phi(r)<q(r)$.
\end{itemize}

If $p\geq \phi(r_c)$, then $Q=0$. Consider $p\in (\phi(r_{\min}),\phi(r_c))$. There exists $\hat{r}\in (r_{\min},r_c)$ such that $\hat{r} = \sup\{r\in I\cap J_1\cap J_2: \phi(r) < p\}$. Thus,
$$\phi(r)<p \Leftrightarrow r<\hat{r}$$
and, therefore, 
$$Q = \sup_{r\in I\cap J_1\cap J_2:r<\hat{r}, p<q(r)} q(r) = q(\hat{r}).$$

For the value of $\hat{r}$ we have
$$\hat{r}=\left\{\begin{array}{rcl} j_{1-} &\mbox{ when } & \kappa \in (0,1] \mbox{ and } p\in \phi(I\cap J_1\cap J_2) \\
\phi^{-1}(p) =\frac{(8+\kappa) p - (8+2\kappa)}{\kappa(p-1)} & & \mbox{ otherwise.} 
\end{array}\right.$$
 \noindent
Note that $Q=Q(p,\kappa) > Q_* = Q_*(\kappa)$. On the other hand, as $p \downarrow p_*$, $\hat{r}$ approaches $r_{\min}$, so that $Q \to Q_*$.
 
% \noindent
%Considering $p \to \infty$ gives us a handle on the moments of $\| \gamma \|_{\infty;[0,1]}$. We have
%
%****
% 
 \noindent
(A similiar discussion about $q$-moments for the $\alpha$-H\"older case is left to the reader.)

%\noindent
%{\bf Quantified finite $q$-moments, $\alpha$-H\"older case}  Similarly, fix $\alpha < \min (\alpha _{\ast }, 1/2)$ so that, according to Theorem \ref{thm:Hoel}, for some $q>1$,
%\[  \mathbb{E}\left\Vert \gamma \right\Vert _{\alpha \text{-H\"ol};\left[0,1\right] }^{q}  <\infty.
%\]
%We now address to question how large $q$ can be ...

\medskip \medskip

\noindent
{\bf Beyond H\"older and variation}
At last, we note that it is possible to regard H\"older and variation regularity as extreme points of a scale of Riesz type variation spaces, recently related to a scale of Nikolskii spaces, see \cite{2016arXiv160903132F}. As
$W^{\delta,p}$ embedds into these spaces, this would allow for another family of SLE regularity statements. % each member of which may be optimization as was seen here in the $p$-variation and H\"older case.

%---------------------------------------------------
\bibliographystyle{alpha}
\bibliography{refs}

\begin{thebibliography}{JVRW14}

\bibitem[Bef08]{Be08}
Vincent Beffara.
\newblock The dimension of the {SLE} curves.
\newblock {\em Ann. Probab.}, 36(4):1421--1452, 2008.

\bibitem[CF10]{CF10}
Thomas Cass and Peter Friz.
\newblock Densities for rough differential equations under {H}\"ormander's
  condition.
\newblock {\em Ann. of Math. (2)}, 171(3):2115--2141, 2010.

\bibitem[CHLT15]{CHLT15}
Thomas Cass, Martin Hairer, Christian Litterer, and Samy Tindel.
\newblock Smoothness of the density for solutions to {G}aussian rough
  differential equations.
\newblock {\em Ann. Probab.}, 43(1):188--239, 2015.

\bibitem[FP16]{2016arXiv160903132F}
P.~K. {Friz} and D.~J. {Pr{\"o}mel}.
\newblock {Rough path metrics on a Besov-Nikolskii type scale}.
\newblock {\em ArXiv e-prints}, September 2016.

\bibitem[FV06]{FV06}
Peter Friz and Nicolas Victoir.
\newblock A variation embedding theorem and applications.
\newblock {\em J. Funct. Anal.}, 239(2):631--637, 2006.

\bibitem[JVL11]{JVL11}
Fredrik Johansson~Viklund and Gregory~F. Lawler.
\newblock Optimal {H}\"older exponent for the {SLE} path.
\newblock {\em Duke Math. J.}, 159(3):351--383, 2011.

\bibitem[JVRW14]{JVRW14}
Fredrik Johansson~Viklund, Steffen Rohde, and Carto Wong.
\newblock On the continuity of {$\text{SLE}_\kappa$} in {$\kappa$}.
\newblock {\em Probab. Theory Related Fields}, 159(3-4):413--433, 2014.

\bibitem[Law05]{Law05}
Gregory~F. Lawler.
\newblock {\em Conformally invariant processes in the plane}, volume 114 of
  {\em Mathematical Surveys and Monographs}.
\newblock American Mathematical Society, Providence, RI, 2005.

\bibitem[Law12]{L12}
Gregory~F. Lawler.
\newblock Fractal and multifractal properties of {S}chramm-{L}oewner evolution.
\newblock In {\em Probability and statistical physics in two and more
  dimensions}, volume~15 of {\em Clay Math. Proc.}, pages 277--318. Amer. Math.
  Soc., Providence, RI, 2012.

\bibitem[Lin08]{Lind08}
Joan~R. Lind.
\newblock H\"older regularity of the {SLE} trace.
\newblock {\em Trans. Amer. Math. Soc.}, 360(7):3557--3578, 2008.

\bibitem[LR15]{LR15}
Gregory~F. Lawler and Mohammad~A. Rezaei.
\newblock Minkowski content and natural parameterization for the
  {S}chramm-{L}oewner evolution.
\newblock {\em Ann. Probab.}, 43(3):1082--1120, 2015.

\bibitem[LS11]{LSh11}
Gregory~F. Lawler and Scott Sheffield.
\newblock A natural parametrization for the {S}chramm-{L}oewner evolution.
\newblock {\em Ann. Probab.}, 39(5):1896--1937, 2011.

\bibitem[LSW04]{LSW04}
Gregory~F. Lawler, Oded Schramm, and Wendelin Werner.
\newblock Conformal invariance of planar loop-erased random walks and uniform
  spanning trees.
\newblock {\em Ann. Probab.}, 32(1B):939--995, 2004.

\bibitem[LW16]{LW16}
Gregory Lawler and Brent Werness.
\newblock Personal communication.

\bibitem[RS05]{RS05}
Steffen Rohde and Oded Schramm.
\newblock Basic properties of {SLE}.
\newblock {\em Ann. of Math. (2)}, 161(2):883--924, 2005.

\bibitem[Wer12]{Wer12}
Brent~M. Werness.
\newblock Regularity of {S}chramm-{L}oewner evolutions, annular crossings, and
  rough path theory.
\newblock {\em Electron. J. Probab.}, 17:no. 81, 21, 2012.

\end{thebibliography}

\end{document}